\newcounter{my_enumerate_counter}
\newcommand{\pushcounter}{\setcounter{my_enumerate_counter}{\value{enumi}}}
\newcommand{\popcounter}{\setcounter{enumi}{\value{my_enumerate_counter}}}
\DeclareMathOperator{\bfTrep}{\mathbf T_{rep}} 
\DeclareMathOperator{\calLrep}{\calL_{rep}}
\DeclareMathOperator{\id}{id}
\newcommand{\cU}{\mathcal U}
\newcommand{\bt}{\mathbf t}
\newcommand{\bbF}{{\mathbb F}}
\newcommand{\bbZ}{{\mathbb Z}}
\newcommand{\bbN}{{\mathbb N}}
\newcommand{\bbC}{\mathbb C}
\newcommand{\calL}{\mathcal L}
\newcommand{\cM}{{\mathcal M}}
\newcommand{\rs}{\restriction}
\newcommand{\cK}{\mathcal K}
\newcommand{\e}{\varepsilon}
\newtheorem{thm}{Theorem}[section]
\newtheorem{theorem}{Theorem}
\newtheorem{corollary}[theorem]{Corollary}
\newtheorem{question}[thm]{Question}
\newtheorem{lemma}[thm]{Lemma}
\newtheorem{prop}[thm]{Proposition}
\DeclareMathOperator{\dist}{dist}
\theoremstyle{definition}
\newcommand{\dminus}{ 
\buildrel\textstyle\ .\over{\hbox{ 
\vrule height3pt depth0pt width0pt}{\smash-} 
}}
\newcommand{\cstar}{$\mathrm{C}^*$}
\newcommand{\cst}{\mathrm{C}^*}
\newcommand{\prB}{\prod_{\cU} B_j}
\newcommand{\prH}{\prod_{\cU} H_j}
\newcommand{\WOT}[1]{\overline{#1}^{\rm{WOT}}}
\title{A new bicommutant theorem} 
\author{I.\ Farah}
\address{Department of Mathematics and Statistics,
York University,
4700 Keele Street,
North York, Ontario, Canada, M3J
1P3}
\email{ifarah@mathstat.yorku.ca}
\urladdr{http://www.math.yorku.ca/$\sim$ifarah}
\subjclass{46L05, 03C20, 03C98}
\date{\today}
\begin{document}

\begin{abstract} We prove an  analogue of Voiculescu's theorem: 
Relative bicommutant of a  separable unital subalgebra $A$ of an ultraproduct of simple unital 
\cstar-algebras is equal to $A$. 
\end{abstract} 
\maketitle

Ultrapowers\footnote{Throughout  $\cU$ denotes a nonprincipal ultrafilter on $\bbN$.}
 $A^{\cU}$ of separable algebras are, being subject to well-developed model-theoretic methods,  reasonably  well-understood    
 (see e.g. \cite[Theorem~1.2]{FaHaRoTi:Relative} and \S\ref{S.SR}). 
 Since the early 1970s and the influential 
 work of McDuff and Connes
  central sequence algebras $A'\cap A^{\cU}$ play an even more important
role than ultrapowers in the classification of II$_1$ factors and (more recently)  \cstar-algebras. 
While they do not have a well-studied abstract  
analogue, in \cite[Theorem~1]{FaHaRoTi:Relative} it was shown that  
the central sequence algebra of a strongly self-absorbing algebra (\cite{ToWi:Strongly}) is isomorphic to its ultrapower if 
the Continuum Hypothesis holds.  
Relative commutants  $B'\cap D^{\cU}$  
of separable subalgebras of ultrapowers of strongly self-absorbing \cstar-algebras
play an increasingly important role 
in classification program for separable \cstar-algebras
(\cite[\S 3]{matui2014decomposition}, \cite{bosa2015covering}; see also  \cite{tikuisis2015quasidiagonality}, \cite{Win:QDQ}). 
In the present note we make a step towards better  understanding of 
these algebras. 
  
 \cstar-algebra is \emph{primitive} if it has  representation that is both faithful and irreducible. 
We prove an 
 analogue of the well-known consequence of Voiculescu's 
 theorem (\cite[Corollary~1.9]{voiculescu1976non}) and von Neumann's bicommutant theorem (\cite[\S I.9.1.2]{Black:Operator}).

\begin{theorem} \label{T1} 
Assume $\prB$ is an ultraproduct of  primitive 
 \cstar-algebras   and 
$A$ is a separable  \cstar-subalgebra. In addition assume $A$ is a unital subalgebra if $\prB$ is unital. Then 
(with $\WOT {A}$ computed in the 
ultraproduct of faithful irreducible representations of $B_j$s)
\[
A=\biggl(A'\cap \prB\biggr)'
=\WOT{A}\cap \prB. 
\] 
\end{theorem}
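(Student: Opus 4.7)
The containments $A \subseteq \WOT{A} \cap \prB \subseteq (A' \cap \prB)'$ are routine: the first is trivial, and for the second any $x \in \WOT{A} = A''$ (acting on the ultraproduct Hilbert space $\prH$) commutes with $A'$, hence with the smaller set $A' \cap \prB$. The content of the theorem is the reverse inclusion, which I reduce to a single separation lemma: if $b \in \prB$ and $d(b, A) > 0$, then there exists $c \in A' \cap \prB$ with $[b, c] \neq 0$. Since $(\WOT{A})' = A'$ by the double commutant theorem, any such $c$ automatically separates $b$ from $\WOT{A}$ as well, so both claimed equalities follow at once.

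\textbf{Reduction and local construction.} Being an ultraproduct over a nonprincipal ultrafilter on $\bbN$, the algebra $\prB$ is countably saturated. The separation lemma therefore reduces to a finitary statement: for every finite $F \subseteq A$ and every $\delta > 0$ there is a contraction $c_{F,\delta} \in \prB$ with $\|[c_{F,\delta}, a]\| < \delta$ for $a \in F$ and $\|[c_{F,\delta}, b]\| \geq \epsilon$, where $\epsilon > 0$ depends only on $d(b, A)$. Producing such a $c_{F, \delta}$ for every $F$ and $\delta$ yields, by $\aleph_1$-saturation, a single $c \in A' \cap \prB$ with $\|[b,c]\| \geq \epsilon$. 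To build $c_{F, \delta}$, represent $b = [b_j]_\cU$ and each $a \in F$ as $[a_j^{(a)}]_\cU$; by primitivity, fix a faithful irreducible representation $\pi_j \colon B_j \to B(H_j)$ for each $j$. Because $d(b, A) > 0$, for $\cU$-most $j$ the image $\pi_j(b_j)$ remains at distance bounded below from any preassigned noncommutative polynomial in the $\pi_j(a_j^{(a)})$. Voiculescu's noncommutative Weyl--von Neumann theorem, applied inside $B(H_j)$ to the separable C*-subalgebra generated by the $\pi_j(a_j^{(a)})$, furnishes auxiliary operators that approximately commute with this subalgebra while separating $\pi_j(b_j)$; Kadison's transitivity theorem (using irreducibility of $\pi_j$) then pulls these operators back into $\pi_j(B_j)$ with errors controlled on a finite-dimensional witnessing subspace. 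Taking $c_{F,\delta} := [c_j]_\cU$ closes the reduction.

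\textbf{Main obstacle.} The technical heart of the proof is the coordinatewise step, in which one must simultaneously (i) keep $c_j$ inside the primitive algebra $B_j$ rather than merely in the larger $B(H_j)$, (ii) arrange $\|[\pi_j(c_j), \pi_j(a_j^{(a)})]\| < \delta$ for each $a \in F$, and (iii) keep $\|[\pi_j(c_j), \pi_j(b_j)]\|$ bounded below by a uniform $\epsilon$. The primitivity hypothesis is essential here precisely because it provides a faithful irreducible Hilbert space model in which Voiculescu's theorem (to produce almost-commuting operators witnessing the separation) and Kadison's transitivity theorem (to realize those operators inside $B_j$) can both be invoked. Controlling the numerics carefully enough that the Kadison-transitivity approximation preserves the commutator lower bound with $b$, while not destroying approximate commutation with $F$, is the delicate point of the argument.
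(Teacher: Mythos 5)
Your reduction is sound as far as it goes: both equalities do follow once one shows that every $b\in\prB$ with $\dist(b,A)>0$ fails to commute with some element of $A'\cap\prB$, and countable saturation does reduce this to an approximate, finitary statement. The gap is that this finitary statement is the entire content of the theorem, and the coordinatewise strategy you sketch for it does not work. First, Kadison transitivity only lets you prescribe the action of an element of $B_j$ on a finite-dimensional subspace of $H_j$; agreement on a finite-dimensional subspace gives no control whatsoever over the global commutator norms $\|[c_j,a_j^{(a)}]\|$, so point (ii) of your ``main obstacle'' cannot be closed this way. Second, Voiculescu's theorem has no purchase coordinatewise: in the motivating case $B_j=M_{n_j}(\bbC)$ the space $H_j$ is finite-dimensional, $B(H_j)=\pi_j(B_j)$, and there are no compacts to quotient by; the compact operators that matter in this problem are those on the nonseparable ultraproduct Hilbert space $\prH$, and they are invisible in any single coordinate. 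Third, $\dist(b,A)>0$ transfers to the coordinates only for finitely many $^*$-polynomials at a time, which is far weaker than distance to the algebra the $\pi_j(a_j^{(a)})$ generate. You correctly identify the delicate point but do not resolve it, so the proof is incomplete at its central step.

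The paper's route is entirely different and worth contrasting. It works globally in the structure $(\prB,\prH)$: Glimm's lemma plus countable saturation show that, modulo $\cK(\prH)$ and a suitable projection, the representation of $C=\cst(A,b)$ on $\prH$ is equivalent to its universal representation, and a Hahn--Banach separation argument (Lemma~\ref{L.b}) then produces $q\in A'\cap B(\prH)$ with $[q,b]\neq 0$. The step your proposal has no analogue of is Lemma~\ref{L.**.2}(1): transferring this witness from $A'\cap B(\prH)$ down to $A'\cap\prB$. This is done not coordinatewise but by Day's convexity trick --- the conditions $[x,a]=0$ for $a$ in a finite $F\subseteq A$ together with $\|[x,b]\|\geq r$ form a restricted $\prB$-linear formula, and Lemma~\ref{L.**.1} shows via Hahn--Banach that approximate solvability in $D^{**}$ (where the witness lives, for a separable elementary submodel $D$) implies approximate solvability in $D$, hence in $\prB$; countable saturation then finishes. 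Some such mechanism for passing from $B(\prH)$, or from a second dual, back into the ultraproduct is indispensable, and it is absent from your argument.
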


A slightly weaker version of the following corollary to Theorem~\ref{T1} (stated here 
with Aaron Tikuisis's kind permission) was originally proved by using very different 
methods ($Z(A)$ denotes the center of $A$). 

\begin{corollary}[Farah--Tikuisis, 2015] \label{C1}
Assume $\prB$ is an ultraproduct of simple unital  \cstar-algebras   and 
$A$ is a separable unital subalgebra. 
Then  $Z(A'\cap \prB)=Z(A)$. \qed
\end{corollary}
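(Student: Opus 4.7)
The plan is to obtain Corollary~\ref{C1} as an essentially formal consequence of Theorem~\ref{T1}. The first step is to verify that the hypothesis of the theorem is met. Every simple unital \cstar-algebra is primitive: pure states give rise to irreducible GNS representations, and the kernel of any such representation is a two-sided ideal, which is forced to be zero by simplicity. Hence each $B_j$ is primitive, and Theorem~\ref{T1} applies to our $\prB$ with $A$ unital, as required.

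Next I would dispatch the easy inclusion $Z(A) \subseteq Z(A' \cap \prB)$, which does not require Theorem~\ref{T1}. If $z \in Z(A)$, then $z$ lies in both $A$ and $A'$, so $z \in A' \cap \prB$; moreover every $x \in A' \cap \prB$ commutes with all of $A$, hence in particular with $z$. Therefore $z$ is central in $A' \cap \prB$.

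The nontrivial inclusion $Z(A' \cap \prB) \subseteq Z(A)$ is where Theorem~\ref{T1} enters. Given $z \in Z(A' \cap \prB)$, the element $z$ lies in $\prB$ and commutes with every element of $A' \cap \prB$, so the identity $A = (A' \cap \prB)'$ from Theorem~\ref{T1}, read with the commutant taken relative to $\prB$, immediately yields $z \in A$. Since $z$ also lies in $A' \cap \prB \subseteq A'$, we conclude $z \in Z(A)$.

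There is no substantive obstacle: once the relative-bicommutant identity is in hand, the corollary is a two-line set-theoretic manipulation. The only interpretive point worth noting is that the commutant appearing in Theorem~\ref{T1} must be understood as the relative commutant inside $\prB$, which is consistent with the accompanying equality $A = \WOT{A} \cap \prB$ in the statement.
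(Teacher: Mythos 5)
Your proof is correct and is exactly the intended derivation: the paper states Corollary~\ref{C1} with no separate argument precisely because it follows formally from Theorem~\ref{T1} once one notes that simple unital \cstar-algebras are primitive and that the bicommutant in the theorem is relative to $\prB$. Both the reduction and the two inclusions are handled as the paper intends, so there is nothing to add.
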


At least two open 
problems are  concerned with bicommutants of separable subalgebras of massive operator algebras. 
As is well-known, central sequence algebras $M'\cap M^{\cU}$  of~II$_1$ factors 
in tracial ultrapowers behave differently from the central sequence algebras  of \cstar-algebras. 
For a  II$_1$ factor $M$ with  separable predual the central sequence algebra 
$M'\cap M^{\cU}$ can be abelian or even trivial.  
Popa conjectured that 
if $P$ is a separable subalgebra of an ultraproduct of  II$_1$ factors  
then $(P'\cap \prod_{\cU}N_i)'=P$ implies $P$ is amenable (\cite[Conjecture~2.3.1]{popa2014independence}). 
In the domain of \cstar-algebras, 
Pedersen 
asked (\cite[Remark~10.11]{Pede:Corona}) whether the following variant of 
Theorem~\ref{T1} is true: If  the corona $M(B)/B$ of a $\sigma$-unital \cstar-algebra $B$ 
is simple and $A$ is a separable unital subalgebra, is  $(A'\cap M(B)/B)'=A$? 
(For the connection between ultraproducts and coronas see 
the last paragraph of \S\ref{S.Day}.)

The proof   of Theorem~\ref{T1} 
uses logic of metric structures (\cite{BYBHU}, \cite{FaHaSh:Model2}) 
and  an analysis of the 
interplay between \cstar-algebra $B$ and its second dual~$B^{**}$. 

\subsection*{Acknowledgments} 
Theorem~\ref{T1}
 was inspired by conversations with Aaron Tikuisis and Stuart White and I use this opportunity to thank them. 
 Corollary~\ref{C1} was proved during a very inspirational  visit to the University of Aberdeen in July 2015. 
 I am indebted to Aaron Tikuisis for warm hospitality and stimulating discussions 
and to the   London Mathematical Society  for funding my  visit to Aberdeen. 
 The original proof of a weaker form of Theorem~\ref{T1} was presented in a three-hour seminar 
 at the Fields Institute in February 2016. I  would like to thank the audience, and to 
 George Elliott and Alessandro Vignati in particular,  for numerous sharp observations. 
 After the completion of the present paper Stuart White and Dan Voiculescu  pointed out that 
its results are related to Hadwin's asymptotic double commutant 
theorem  (\cite{hadwin1978asymptotic}, see also \cite{hadwin2011approximate} and
\cite{hadwin2012approximate}), and Martino Lupini pointed out that Theorem~\ref{T1} also holds 
in the nonunital case. I am indebted to the anonymous referee for several 
useful remarks. 
 Last, but not least, I would like to thank Leonel Robert for pointing out an error in an 
 early draft.

\section{Model theory of representations} 
We  expand the language of \cstar-algebras (\cite[\S 2.3.1]{FaHaSh:Model2}) to
   representations of \cstar-algebra. Reader's familiarity with, or at least easy access to, 
   \cite[\S 2]{FaHaSh:Model2} is assumed. 
A structure  in the expanded language $\calLrep$ is a \cstar-algebra together with its 
representation on a Hilbert space. As in \cite{FaHaSh:Model2}, the domains of  quantification on 
\cstar-algebra are $D_n$ for $n\in \bbN$ and are interpreted as  the $n$-balls. 
The domains of quantification on Hilbert space are~$D^H_n$ for $n\in \bbN$  and are also interpreted as the  $n$-balls. 
On all domains the metric is $d(x,y)=\|x-y\|$  (we shall denote both the operator norm on \cstar-algebra and 
the $\ell_2$-norm on Hilbert space by $\|\cdot\|$). 
As in \cite[\S 2.3.1]{FaHaSh:Model2}, for every $\lambda\in \bbC$ we have a 
unary function symbol $\lambda$ to be interpreted as multiplication by~$\lambda$. 
We also have binary function $+$ whose interpretation sends $D^H_m\times D^H_n$ to 
$D^H_{m+n}$. 
As the scalar product
 $(\cdot|\cdot)$ is definable from the norm via the polarization identity, 
 we shall freely use it in our formulas, with the understanding that~$(\xi|\eta)$  is an abbreviation 
 for $\frac 14 \sum_{j=0}^3 i^j \|\xi+i^j \eta\|$.

Language $\calLrep$ also contains a binary function symbol $\pi$ whose interpretation sends 
 $D_n\times D^H_m$ to $D^H_{mn}$ for all $m$ and $n$. It is 
interpreted as an action of $A$ on $H$.

Every variable is associated with a sort. In particular  variables
$x,y,z$ range over the \cstar-algebra and variables $\xi,\eta,\zeta$ 
range over the Hilbert space,  all of them 
decorated with subscripts when needed. 

We shall write $\bar x$ for a tuple $\bar x=(x_1,\dots, x_n)$ (with $n$ either 
clear from the context or irrelevant). 
\emph{Terms} come in two varieties. 
 On the \cstar-algebra side,  term is a noncommutative $^*$-polynomial in \cstar-variables. 
 On the Hilbert space side terms are linear combinations of 
 Hilbert space variables and expressions of the form $\pi(\alpha(\bar x))\xi$ 
where $\alpha(\bar x)$ is a term in the language of \cstar-algebras. 
\emph{Formulas}  are defined recursively.   
 Atomic formulas are   expressions of the form $\|t\|$ where $t$ is a term.

 The set of all formulas is the smallest set $\bbF$ containing all atomic formulas 
with the following properties. 
\begin{enumerate}
\item  [(i)] for every~$n$, all continuous $f\colon [0,\infty)^n\to [0,\infty)$ and all $\varphi_1, \dots, \varphi_n$ in $\bbF$
the expression $f(\varphi_1, \dots, \varphi_n)$ belongs to  $\bbF$, and 
\item [(ii)] if $\varphi\in \bbF$   and $x$, $\xi$ are variable symbols then each of  
 $\sup_{\|\xi\|\leq m} \varphi$,  $\inf_{\|\xi\|\leq m} \varphi$, 
   $\sup_{\|x\|\leq m} \varphi$ and $\inf_{\|x\|\leq m} \varphi$ 
    belongs to $\bbF$
 (see  \cite[\S 2.4]{FaHaSh:Model2} or \cite[Definition~2.1.1]{Muenster}). 
\end{enumerate}
Suppose $\pi\colon A\to B(H)$ is a representation of a \cstar-algebra $A$ on Hilbert space $H$. 
To $(A,H,\pi)$ we associate the natural metric structure $\cM(A,H,\pi)$ 
in the above  language. 

Suppose $\varphi(\bar x, \bar \xi)$ is a formula whose free variables are included among  
$\bar x$ and $\bar\xi$. 
If  $\pi\colon A\to B(H)$ is a representation of a \cstar-algebra on Hilbert space,  
$\bar a$ are elements of 
$A$ and $\bar \xi$ are elements 
of $H$,\footnote{Symbols $\xi,\eta, \zeta$,\dots{} denote both Hilbert space variables and vectors in Hilbert space due to the  font shortage; this shall not lead to a confusion.}
 then the \emph{interpretation} $\varphi(\bar a, \bar \xi)^{\cM(A,H,\pi)}$ 
is defined by recursion on the complexity of $\varphi$ in the obvious way (see \cite[\S 3]{BYBHU}).

\begin{prop} \label{L.Ax.1} Triples $(A,H,\pi)$ such that $\pi$ is a representation of $A$ on $H$ 
form an axiomatizable class. 
\end{prop}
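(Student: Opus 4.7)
The plan is to exhibit an explicit set of $\calLrep$-sentences whose common zero-set is exactly the class of structures of the form $\cM(A,H,\pi)$, built up in three layers.

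First, I import the existing axiomatizations for the ``pure'' parts. The sub-language of $\calLrep$ describing only the C*-algebra $A$ coincides with the one used in \cite[\S 2.3.1]{FaHaSh:Model2}, where the class of C*-algebras is already shown to be axiomatizable. Analogously, the sub-language describing only the Hilbert space $H$ (domains $D^H_n$ with $+$, scalar multiplications, and $\|\cdot\|$) is axiomatized by the parallelogram identity together with standard sup-axioms enforcing completeness of each ball and coherence of the ball filtration. All of these are sentences in $\calLrep$, so they are retained.

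Second, I add sup-sentences forcing $\pi$ to be a $*$-representation. For each $n,m,k\in\bbN$ and each $\lambda\in\bbC$, the following expressions are identically zero on every representation and, by clauses~(i)--(ii) together with the fact that the inner product is definable from the norm via polarization, are sentences in $\calLrep$:
\begin{align*}
&\sup_{\|x\|\le n}\sup_{\|y\|\le n}\sup_{\|\xi\|\le k} \|\pi(x+y)\xi - \pi(x)\xi - \pi(y)\xi\|, \\
&\sup_{\|x\|\le n}\sup_{\|\xi\|\le k} \|\pi(\lambda x)\xi - \lambda\pi(x)\xi\|, \\
&\sup_{\|x\|\le n}\sup_{\|\xi\|\le k}\sup_{\|\eta\|\le k} \|\pi(x)(\xi+\eta) - \pi(x)\xi - \pi(x)\eta\|, \\
&\sup_{\|x\|\le n}\sup_{\|y\|\le m}\sup_{\|\xi\|\le k} \|\pi(xy)\xi - \pi(x)\pi(y)\xi\|, \\
&\sup_{\|x\|\le n}\sup_{\|\xi\|\le k}\sup_{\|\eta\|\le k} \bigl|(\pi(x)\xi|\eta) - (\xi|\pi(x^*)\eta)\bigr|.
\end{align*}
The contractivity inequality $\|\pi(a)\xi\|\le\|a\|\,\|\xi\|$ does not need to appear separately, because the signature already imposes $\pi(D_n\times D^H_m)\subseteq D^H_{nm}$ and, moreover, a $*$-homomorphism between C*-algebras is automatically contractive.

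Third, I would check sufficiency. In any $\calLrep$-structure satisfying all of the above axioms, $A$ is a C*-algebra and $H$ is a Hilbert space by the first layer; the linearity and multiplicativity axioms then assign to each $a\in A$ a bounded linear map $\pi(a)\colon H\to H$ that depends linearly and multiplicatively on $a$; and the adjoint axiom, holding for all $\xi,\eta$, forces $\pi(a)^*=\pi(a^*)$ in $B(H)$. Hence the structure is precisely $\cM(A,H,\pi)$ for some representation $\pi\colon A\to B(H)$. The only real obstacle is mild bookkeeping: verifying that each displayed expression is literally a formula in the sense of~(i)--(ii) once the inner product is expanded via polarization, and checking that consistency of $\pi$ across the nested domains $D_n\subseteq D_{n+1}$ is absorbed into the standard conventions for many-sorted metric structures used in \cite{FaHaSh:Model2}.
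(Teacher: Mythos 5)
Your proposal is correct and follows essentially the same route as the paper: import the \cstar-algebra axiomatization from \cite[\S 3.1]{FaHaSh:Model2}, add Hilbert-space axioms and the representation identities as sup-sentences (using polarization for the inner product), and verify that every model of the resulting theory arises as $\cM(A,H,\pi)$. The one point the paper makes explicit that you leave as ``coherence of the ball filtration'' is the axiom $\sup_{\xi \in D_n} (1\dminus \|\xi\|)\inf_{\eta\in D_1} \|\xi-\eta\|=0$, which forces each domain $D^H_n$ to be exactly the $n$-ball of the underlying Hilbert space; this is precisely what is needed in your sufficiency step to conclude that a model of the theory is isomorphic to the structure canonically attached to the triple it determines.
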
 

\begin{proof} 
As in \cite[Definition~3.1]{FaHaSh:Model2}, 
we need to define a $\calLrep$-theory $\bfTrep$ 
  such that 
that the category of triples $(A, H,\pi)$ where $\pi\colon A\to B(H)$ 
is a representation of a \cstar-algebra $A$ is equivalent to the category 
of metric structures that are models of $\bfTrep$, 
via the map 
\[
(A,H,\pi)\mapsto \cM(A, H,\pi). 
\]
We use 
the axiomatization of \cstar-algebras from \cite[\S 3.1]{FaHaSh:Model2}. 
In addition to the standard Hilbert space axioms, we need two axioms assuring that  
the interpretation of $D^H_n$ equals the $n$-ball of the underlying Hilbert space
for all $n$: 
\[
\sup_{\xi \in D_n}\|\xi\|\leq n
\]
and
 (writing  $s\dminus t:=\max(s-t,0)$), 
\begin{enumerate}
\item [(*)]  \label{Eq.DH}
$\sup_{\xi \in D_n} (1\dminus \|\xi\|)\inf_{\eta\in D_1} \|\xi-\eta\|$.
\end{enumerate}
 The standard axioms, 
\[
\pi(xy)\xi=\pi(x)\pi(y)\xi, \quad
\pi(x+y)\xi=\pi(x)\xi+\pi(y)\xi\quad
(\pi(x)\xi|\eta)=(\xi|\pi(x^*)\eta)
\]
are expressible as first-order sentences.\footnote{Our conventions are as described in \cite[p. 485]{FaHaSh:Model2}. 
In particular $\alpha(x,\xi)=\beta(x,\xi)$ 
is an abbreviation for  $\sup_{\xi\in D_n} \sup_\xi\|\alpha(x,\xi)-\beta(x,\xi)\|=0$, for all $n$.} 
The axioms described here comprise theory $\bfTrep$.

One needs to check that the category of models of $\bfTrep$  
is equivalent to the category 
of triples $(A,H,\pi)$. Every triple $(A,H,\pi)$ uniquely defines a model $\cM(A,H,\pi)$. 
Conversely, assume $\cM$ is a model of $\bfTrep$. 
The algebra~$A_{\cM}$ obtained from the first component of $\cM$  
is a \cstar-algebra by \cite[Proposition~3.2]{FaHaSh:Model2}. 
Also,  the linear space $H_{\cM}$ obtained from the second component of $\cM$
is a Hilbert space and the third component gives a representation $\pi_{\cM}$ of $A$ on~$H$. 

To see that this provides an equivalence of categories, we need to check that 
$\cM(A_{\cM},H_{\cM},\pi_{\cM})\cong \cM$ for every model $\cM$ of $\bfTrep$. 
We need to show that the domains on $\cM$ are determined by $A_{\cM}$ and $H_{\cM}$. 
The former was proved in  the second paragraph of 
 \cite[Proposition~3.2]{FaHaSh:Model2}, and the latter follows by~(*). 
 \end{proof} 
 
 Proposition~\ref{L.Ax.1} gives us full access to the model-theoretic toolbox, such as the   \L o\'s' theorem 
  (see  \S\ref{S.SR}) and the L\"owenheim--Skolem Theorem (\cite[Theorem~4.6]{FaHaSh:Model2}). 
From now on, we shall identify triple $(A,H,\pi)$ with the 
associated metric structure $\cM(A,H,\pi)$ and stop using the latter notation. 
We shall also write $\sup_{\|\xi\|\leq n}$ and $\inf_{\|\xi\|\leq n}$ instead of $\sup_{\xi\in D_n}$ 
and $\inf_{\xi\in D_n}$, respectively. 

\begin{lemma} \label{L.Ax.2} The following properties of a  representation $\pi$ of $A$ are axiomatizable: 
\begin{enumerate}
\item  $\pi$ is faithful  
\item $\pi$ is irreducible. 
\pushcounter
\end{enumerate}
\end{lemma}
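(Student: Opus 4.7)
The plan is to extend $\bfTrep$ by explicit axioms in $\calLrep$ for each property and verify that the class of triples $(A,H,\pi)$ satisfying the augmented theory is exactly the class with the desired property.

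For (1), faithfulness of a $^*$-homomorphism between \cstar-algebras is equivalent to its being isometric, and every representation is automatically contractive (provably from $\bfTrep$). On a Hilbert space the operator norm is realized as $\sup_{\|\xi\|\leq 1}\|\pi(x)\xi\|$, so I would take as axioms, one for each $n\in\bbN$,
\[
\sup_{x\in D_n}\Big(\|x\|\dminus \sup_{\xi\in D_1}\|\pi(x)\xi\|\Big)=0,
\]
which says exactly that $\pi$ is isometric on every $D_n$, equivalently on all of $A$.

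For (2), I would invoke Kadison's transitivity theorem: $\pi$ is irreducible iff for every unit $\xi\in H$ and every $\eta$ with $\|\eta\|\leq 1$ there exists $a\in A$ with $\|a\|\leq 1$ and $\pi(a)\xi=\eta$. Since ``$\|\xi\|=1$'' is not a domain quantifier available in continuous logic, I would use the penalty-style sentence
\[
\sup_{\xi\in D_1}\sup_{\eta\in D_1}\inf_{a\in D_1}\Big(\|\pi(a)\xi-\eta\|\dminus(1-\|\xi\|)\Big)=0.
\]
If $\pi$ is irreducible and $\xi,\eta\in D_1$ with $\xi\neq 0$, Kadison applied to $\xi/\|\xi\|$ yields $a$ with $\|a\|=\|\eta\|\leq 1$ and $\pi(a)\xi=\|\xi\|\eta$, whence $\|\pi(a)\xi-\eta\|=(1-\|\xi\|)\|\eta\|\leq 1-\|\xi\|$ (and for $\xi=0$ take $a=0$). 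Conversely, whenever $\xi$ is a unit vector the axiom forces $\inf_{a\in D_1}\|\pi(a)\xi-\eta\|=0$ for every $\eta\in D_1$, so $\pi(A)\xi$ is dense in $H$, proving irreducibility.

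The main obstacle is (2): finding a single continuous sentence whose zero set coincides with irreducibility. The natural density formulation quantifies only over nonzero $\xi$, which is not a bounded definable condition in $\calLrep$. Introducing $(1-\|\xi\|)$ as a slack term, together with the sharp norm control $\|a\|=\|\eta\|$ provided by Kadison's transitivity, is exactly what converts the reducibility test into a bona fide $\sup$-$\inf$ axiom over the bounded domains available in the language.
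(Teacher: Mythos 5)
Your proposal is correct and follows essentially the same route as the paper: faithfulness is axiomatized as being isometric (your $\sup_{x}(\|x\|\dminus\sup_{\|\xi\|\le 1}\|\pi(x)\xi\|)$ is equivalent to the paper's $\sup_x\inf_\xi|\|x\|-\|\pi(x)\xi\||$), and irreducibility by a $\sup\sup\inf$ sentence in which a slack term excuses vectors $\xi$ of norm less than $1$, with Kadison transitivity supplying the norm control needed in the verification. The only difference is notational: you implement the slack as $\dminus(1-\|\xi\|)$ while the paper multiplies by a weight depending on $\|\xi\|$; both encode the same idea.
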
 

\begin{proof} We shall explicitly write the axioms for each of the  properties of~$\pi$. 
Fix a representation $\pi$.  It is faithful if and only if it is isometric, which can be expressed as  
\[
\sup_{\|x\|\leq 1} \inf_{\|\xi\|\leq 1}|\|x\|-\|\pi(x)\xi\||=0. 
\]
Representation $\pi$ 
is irreducible if and only if for all vectors $\xi$ and $\eta$ in~$H$ such that $\|\eta\|\leq 1$ and $\|\xi\|= 1$ 
the expression  $\|\eta-\pi(a)\xi\|$ can be made arbitrarily small when $a$ ranges over the unit ball of $A$.  In symbols 
\[
\sup_{\|\xi\|\leq 1}\sup_{\|\eta\|\leq 1}\inf_{\|x\|\leq 1} |\|\xi\|\dminus 1|\|\eta-\pi(x)\xi\|=0. 
\]
The interpretation of this sentence in $(A, H,\pi)$ is  0 if and only if $\pi$ is irreducible. 
\end{proof} 

Triple  $(D,\theta,K)$ is an \emph{elementary submodel} 
of $(B,\pi,H)$, and  $(B,\pi,H)$ is an \emph{elementary extension} of 
$(D,\theta,K$,  if $D\subseteq B$, $K\subseteq H$
$\theta(d)=\pi(d)\rs H$ for all $d\in D$, and   
 \[
 \varphi(\bar a)^{(D,\theta, K)}=\varphi(\bar a)^{(B,\pi,H)}
 \]
  for all formulas $\varphi$ 
 and all $\bar a$ in $(D,\theta, K)$ of the appropriate sort. 
Axiomatizable properties, such as being irreducible or faithful, 
transfer between elementary submodels and elementary extensions. 
Therefore the Downwards L\"owenheim--Skolem Theorem (\cite[Theorem~4.6]{FaHaSh:Model2}) and Lemma~\ref{L.Ax.2} together imply  e.g. that if $\varphi$ is a pure state of 
a nonseparable \cstar-algebra $B$ then $B$ is an inductive limit of 
separable subalgebras $D$ such that the restriction of $\varphi$ to~$D$ is pure.
This fact was proved in   \cite{AkeWe:Consistency} and its 
slightly more precise version  will be used in the proof of Lemma~\ref{L.**.2}.

Some other properties of representations (such as not being faithful) 
 are axiomatizable but we shall concentrate on proving Theorem~\ref{T1}.

\section{Saturation and representations} \label{S.SR} 
It was known to logicians since the 1960s that 
the two defining properties of ultraproducts  associated with nonprincipal ultrafilters on $\bbN$ in axiomatizable categories 
are \L o\'s' Theorem (\cite[Proposition~4.3]{FaHaSh:Model2}) 
and countable saturation (\cite[Proposition~4.11]{FaHaSh:Model2}). 
By the former, the diagonal embedding of metric structure $M$ into its ultrapower is 
elementary. 
More generally, 
if $\varphi(\bar x)$ is a formula and $\bar a(j)\in M_j$ are of the appropriate 
sort then 
\[
\varphi(\bar a)^{\prod_{\cU} M_j}=\lim_{j\to \cU} \varphi(\bar a(j))^{M_j}. 
 \]
In order to define countable saturation, 
we recall the notion of a type from the logic of metric structures (\cite[\S 4.3]{FaHaSh:Model2}).  
 A  \emph{closed condition} (or simply a \emph{condition}; we shall not need any other conditions) 
 is any expression of the form $\varphi\leq r$  for formula $\varphi$ and $r\geq 0$
 and \emph{type} is a set of conditions
 (\cite[\S 4.3]{FaHaSh:Model2}).   As every expression of the form $\varphi=r$ is equivalent to the condition $\max(\varphi, r)\leq r$
 and every expression of the form $\varphi\geq r$ is equivalent  to the condition $\min(0,r-\varphi)\leq 0$, we shall freely refer to such expressions as conditions. 
For $m$ and $n$ in $\bbN$ such that $m+n\geq 1$, an  \emph{$(m,n)$-type} is a type~$\bt$ such that all free variables occurring in  conditions of $\bt$ are among  $\{x_1, \dots, x_m\}\cup \{\xi_1, \dots, \xi_n\}$.

Given a structure $(A,H,\pi)$ and a subset $X$ of $A\cup H$, we expand the language 
$\calLrep$ by adding constants for the elements of $X$ (as in \cite[\S 2.4.1]{FaHaSh:Model2}). 
The new language is denoted 
 $(\calLrep)_X$. 
\cstar-terms in  $(\calLrep)_X$ 
are $^*$-polynomials in \cstar-variables and constants from $X\cap A$. 
Hilbert space terms are linear combinations of 
 Hilbert space variables,  constants in $X\cap H$,   and expressions 
 of the form $\pi(\alpha)\xi$ 
where $\alpha$ is a \cstar-term in the expanded language. 
The interpretation of  a $(\calLrep)_X$-formula is defined recursively in the natural way (see e.g.  \cite{Muenster}, 
paragraph after Definition~2.1.1). 
  
Type \emph{over} $X$ is a 
type in $(\calLrep)_X$.  
Such type is \emph{realized} in some elementary extension of $(A,H,\pi)$ if 
the latter contains a tuple satisfying all conditions from the type. 
A type  is \emph{consistent} if it is realized in some ultrapower of $(A,H,\pi)$, 
where the ultrafilter is taken over an arbitrary, not necessarily countable set. 
This is equivalent to the type being realized in some elementary extension of $(A,H,\pi)$. 
 
By \L o\'s' Theorem, type 
$\bt$ is consistent if and only every finite subset of $\bt$ is $\e$-realized in $(A,H,\pi)$ 
for every $\e>0$ (\cite[Proposition~4.8]{FaHaSh:Model2}).

A structure $(A,H,\pi)$ is said to be \emph{countably saturated} if every consistent type over  
a countable (or equivalently,
 norm-separable) set is realized in $(A,H,\pi)$. Ultraproducts associated with nonprincipal ultrafilters 
on $\bbN$ are always countably saturated (\cite[Proposition~4.11]{FaHaSh:Model2}). 
A standard transfinite back-and-forth argument shows that 
 a structure of density character $\aleph_1$ is countably saturated if and only if it is an 
 ultraproduct. 
(\emph{Density character} is the smallest cardinality of a dense subset.)

 In the case when  $A=B(H)$ we have $(B(H),H)^{\cU}=(B(H)^{\cU}, H^{\cU})$, 
 in particular $B(H)^{\cU}$  is  identified with a subalgebra of $B(H^{\cU})$. 
These two algebras are equal (still assuming $\cU$ is a nonprincipal ultrafilter on $\bbN$) 
if and only if $H$ is finite-dimensional. As a matter of fact, no projection 
$p\in B(H^{\cU})$ with a separable, infinite-dimensional,  range belongs to $B(H)^{\cU}$
(this is proved by a standard argument, 
see e.g. the last two paragraphs of the proof of \cite[Proposition~4.6]{FaHaSh:Model1}).

In the following,    $\pi$ will always be faithful and 
clear from the context and we shall identify~$A$ with $\pi(A)$ and suppress writing $\pi$. 
We shall therefore write $(A,H)$ in place of $(A,H,\id)$.

The following two lemmas are standard (they were used by Arveson 
in the proof of Corollary 2 on p 344 of \cite{Arv:Notes}) 
but we sketch the proofs for the   reader's convenience. 

\begin{lemma} \label{L.GNS} 
Suppose $A$ is a \cstar-algebra and $\varphi$ is a  
functional on $A$. Then there are a representation $\pi\colon A\to B(K)$ 
and vectors $\xi$ and $\eta$ in~$K$ such that $\varphi(a)=(\pi(a)\xi|\eta)$ for all $a\in A$. 
\end{lemma}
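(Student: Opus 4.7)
The plan is a standard polarization-plus-GNS argument, and one can already guess the shape of $K$ will be a fourfold direct sum.

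First I would assume $\varphi$ is a bounded linear functional (as is standard) and reduce to the positive case via the Jordan decomposition: write $\varphi=\psi_1+i\psi_2$ where $\psi_1(a)=\tfrac12(\varphi(a)+\overline{\varphi(a^*)})$ and $\psi_2(a)=\tfrac{1}{2i}(\varphi(a)-\overline{\varphi(a^*)})$ are self-adjoint (hermitian) bounded functionals, and then apply Jordan decomposition on each to obtain positive functionals $\varphi_1,\varphi_2,\varphi_3,\varphi_4$ with
\[
\varphi=\varphi_1-\varphi_2+i\varphi_3-i\varphi_4.
\]

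Next, for each $j\in\{1,2,3,4\}$ apply the standard GNS construction to $\varphi_j$ to obtain a representation $\pi_j\colon A\to B(K_j)$ together with a cyclic vector $\xi_j\in K_j$ satisfying $\varphi_j(a)=(\pi_j(a)\xi_j|\xi_j)$ for all $a\in A$. Form the direct sum representation $\pi=\pi_1\oplus\pi_2\oplus\pi_3\oplus\pi_4$ on $K=K_1\oplus K_2\oplus K_3\oplus K_4$.

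Finally, I would choose the two vectors so that the inner product against $\pi(a)$ reproduces the correct linear combination. Set
\[
\xi=(\xi_1,\xi_2,\xi_3,\xi_4),\qquad \eta=(\xi_1,-\xi_2,-i\xi_3,i\xi_4),
\]
and compute
\[
(\pi(a)\xi\mid\eta)=\sum_{j=1}^{4}(\pi_j(a)\xi_j\mid c_j\xi_j)=\varphi_1(a)-\varphi_2(a)+i\varphi_3(a)-i\varphi_4(a)=\varphi(a),
\]
using that the scalars $c_j\in\{1,-1,-i,i\}$ pull out as their complex conjugates from the second argument.

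There is no real obstacle; the only point to be careful about is the sign/conjugation bookkeeping in the last step so that the four terms assemble to exactly $\varphi$ rather than $\overline{\varphi}$. If the statement is meant in greater generality (e.g.\ $\varphi$ not a priori bounded) one would need an additional argument, but in the \cstar-context the only functionals that arise in Arveson's setup are bounded, so the above suffices.
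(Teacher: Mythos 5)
Your proof is correct, but it takes a genuinely different route from the paper. The paper works in the second dual: it extends $\varphi$ to a normal functional on $A^{**}$, invokes Sakai's polar decomposition $\varphi(a)=\psi(av)$ with $\psi$ a normal state and $v$ a partial isometry, and then uses the single GNS representation of $\psi$ with cyclic vector $\eta$ and $\xi=v\eta$. You instead use the Jordan decomposition $\varphi=\varphi_1-\varphi_2+i\varphi_3-i\varphi_4$ into four positive functionals, take the direct sum of the four GNS representations, and absorb the coefficients into the second vector; your sign bookkeeping is consistent with the paper's convention that $(\cdot\mid\cdot)$ is conjugate-linear in the second argument, so the four terms do assemble to $\varphi$ and not $\overline{\varphi}$. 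The two approaches buy slightly different things. The polar-decomposition argument yields a single cyclic representation with the sharp norm control $\|\xi\|\,\|\eta\|\leq\|\varphi\|$, which is the form in which this lemma usually appears (e.g.\ in Arveson's notes); your construction only gives $\|\xi\|=\|\eta\|$ with $\|\xi\|^2=\sum_j\|\varphi_j\|$, which can be as large as $2\|\varphi\|$, but the statement asks for no norm control and the downstream application (Lemma~\ref{L.b}) needs none --- there one only needs $(\pi(a)\xi\mid\eta)=0$ for $a\in A$ and $(\pi(b)\xi\mid\eta)\neq 0$, which your $\xi,\eta$ provide. One caveat: the Jordan decomposition of a hermitian functional into a difference of positive functionals is itself a nontrivial theorem (see e.g.\ \cite[Theorem~3.2.5]{Pede:C*}), typically proved via $A^{**}$ or a weak$^*$-compactness argument, so your route is not really more elementary than the paper's --- it just trades one structural fact about the dual for another.
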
 

\begin{proof} Let $\bar\varphi$ be the unique extension of $\varphi$ to 
a normal functional of the von Neumann algebra $A^{**}$. 
By Sakai's polar decomposition for normal linear functionals (see e.g. \cite[Proposition~3.6.7]{Pede:C*})
there exists a normal state $\psi$ of~$A^{**}$ and a partial isometry $v$ such that 
$\varphi(a)=\psi(av)$ for all $a\in A^{**}$. Let $\pi\colon A^{**}\to B(K)$ be the GNS representation 
corresponding to $\psi$. If $\eta$ is the corresponding cyclic vector and  $\xi=v\eta$, 
then the restriction of $\pi$ to $A$ is as required. 
\end{proof} 

\begin{lemma}\label{L.b}
Suppose $A$ is a proper unital subalgebra of $C=\cst(A,b)$. 
Then there exists a representation $\pi\colon C\to B(K)$ and a 
projection $q$ in $\pi(A)'\cap B(K)$ such that $[q,b]\neq 0$. 
\end{lemma}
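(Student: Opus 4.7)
The plan is to combine Hahn--Banach with the previous lemma (Lemma~\ref{L.GNS}) and an invariant-subspace construction. Because $A$ is a proper unital subalgebra of $C=\cst(A,b)$, the element $b$ cannot lie in $A$ (otherwise $C=A$). By Hahn--Banach, pick a bounded linear functional $\varphi$ on $C$ with $\varphi\rs A=0$ and $\varphi(b)\neq 0$. Lemma~\ref{L.GNS} then yields a representation $\pi\colon C\to B(K)$ and vectors $\xi,\eta\in K$ such that $\varphi(c)=(\pi(c)\xi|\eta)$ for every $c\in C$. This turns an analytic separation statement into a geometric one inside a single representation, which is what is needed in order to produce a projection.

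Next I would let $K_0:=\overline{\pi(A)\xi}$ and take $q\in B(K)$ to be the orthogonal projection onto $K_0$. The subspace $K_0$ is $\pi(A)$-invariant by construction, and since $A$ is $^*$-closed, $K_0^\perp$ is $\pi(A)$-invariant as well; hence $q\in \pi(A)'\cap B(K)$. Unitality of $A$ gives $\xi=\pi(1)\xi\in K_0$, so $q\xi=\xi$, while $\varphi\rs A=0$ says exactly that $\eta\perp K_0$, so $q\eta=0$.

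Finally, the commutator is unmasked by pairing against $\xi$ and $\eta$:
\[
([q,\pi(b)]\xi|\eta)=(\pi(b)\xi|q\eta)-(\pi(b)\xi|\eta)=0-\varphi(b)\neq 0,
\]
so $[q,b]\neq 0$ in $B(K)$ after identifying $b$ with $\pi(b)$. The only real subtlety is the verification that $q\in\pi(A)'$, which rests on the fact that $\pi(A)$ is self-adjoint so that both $K_0$ and $K_0^\perp$ are invariant; everything else is routine. This is also the reason the lemma needs the ambient structure to be a $^*$-algebra rather than just an algebra.
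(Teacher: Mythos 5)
Your proposal is correct and follows essentially the same route as the paper: Hahn--Banach to get a norm-one functional annihilating $A$ with $\varphi(b)=\dist(b,A)>0$, Lemma~\ref{L.GNS} to realize $\varphi(c)=(\pi(c)\xi|\eta)$, and then the projection onto $\overline{\pi(A)\xi}$, which lies in $\pi(A)'$ by self-adjointness of $\pi(A)$ and fails to commute with $\pi(b)$ by the pairing computation you display. The only difference is that you write out explicitly the commutator computation that the paper leaves implicit.
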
 

\begin{proof} 
By the Hahn--Banach separation theorem there exists a functional~$\varphi$ on $C$ of norm~$1$
such that $\varphi$ annihilates $A$ and $\varphi(b)=\dist(A,b)$.  
Let $\pi\colon C\to B(K)$, $\eta$ and $\xi$ be as guaranteed by 
Lemma~\ref{L.GNS}. Let $L$ be the norm-closure of~$\pi(A)\xi$. 
Since $A$ is unital $L\neq \{0\}$. As $0=\varphi(a)=(\pi(a)\xi|\eta)$ for all $a\in A$, 
$\eta$ is orthogonal to $L$ and therefore the projection $p$ to $L$ is nontrivial. 
Clearly $p\in \pi(A)'\cap B(K)$. 
Since $(\pi(b)\xi|\eta)=\varphi(b)\neq 0$, 
$\pi(b)$ does not commute with~$p$ and we therefore have 
 $q\in \pi(A)'\cap B(K)$ such that $\|[\pi(b),q]\|>0$. 
\end{proof} 

The proof of Theorem~\ref{T1} would be much simpler if 
Lemma~\ref{L.b} provided an irreducible representation. 
This is impossible in general  as the following example shows. 
Let $A$ be the unitization of the algebra of compact operators $\cK(H)$
on an infinite-dimensional Hilbert space and let $b$ be a projection in~$B(H)$ 
Murray-von Neumann equivalent to $1-b$. 
Then $C=\cst(A,b)$ has (up to equivalence) three irreducible representations. 
Two of those representations  annihilate $A$
and send $b$ to a scalar, and the third  representation is faithful and the image of 
$b$ is in the weak operator closure of the image of~$A$. 

 It is well-known that for a Banach space $X$ the second dual $X^{**}$ can be embedded into an  ultrapower of $X$ (\cite[Proposition~6.7]{heinrich1980ultraproducts}). In general, the second dual $A^{**}$ of a \cstar-algebra 
$A$  cannot be embedded into  an ultrapower of $A$ for at least two reasons. 
First, $A^{**}$ is a von Neumann algebra (\cite[\S III.5.2]{Black:Operator}) and it therefore
has real rank zero, while $A$ may have no nontrivial projections at all. 
 Since being projectionless is axiomatizable (\cite[Theorem~2.5.1]{Muenster}), 
  if $A$ is projectionless then \L o\'s's Theorem implies that $A^{\cU}$ is projectionless as well 
  and  $A^{**}$ cannot be  embeded  into it.    
The anonymous referee pointed out an another, much subtler, obstruction. 
In the context of Banach spaces, the embeddability of $X^{**}$ into $X^{\cU}$ is 
equivalent to a finitary statement, the so-called \emph{local reflexivity} of Banach spaces
the \cstar-algebraic version of which does not hold for all \cstar-algebras  
\cite[\S 5]{effros1985lifting}. In particular, for a large class of \cstar-algebras the diagonal 
embedding of $A$ into $A^{\cU}$ cannot be extended even to a unital  completely positive  map 
 from $A^{**}$ into $A^{\cU}$. 
The anonymous referee also pointed out that a result of J.~M.~G. Fell 
is closely related to  results of
the present section. It is a standard fact that a representation of a discrete group
is weakly contained in another representation of the same group if and only if it  
can be embedded into an ultrapower of the direct sum of infinitely many copies of the 
latter representation. 
In   \cite[Theorem~1.2]{fell1960dual} it was essentially proved that this equivalence carries over to arbitrary 
 \cstar-algebras.

All this said,  Lemma~\ref{L.**} below  
 is a poor man's 
 \cstar-algebraic 
 variant of the fact that Banach space~$X^{**}$ embeds into $X^{\cU}$. 
 As in \cite[3.3.6]{Pede:C*}, we say that two representations
 $\pi_1$ and $\pi_2$ of $A$ are said to be  \emph{equivalent} if the identity map on 
 $A$ extends to an isomorphism between 
 $\pi_1(A)''$ and $\pi_2(A)''$.

\begin{lemma}\label{L.**} Assume $(\prB, \prH)$ is an ultraproduct of  faithful 
irreducible representations of unital  \cstar-algebras and $C$ is a 
 unital separable subalgebra of $B^{\cU}$.  
\begin{enumerate}
\item 
If  $C\cap \cK(\prH)=\{0\}$  
then the induced representation of $C$ on $\prH$ is equivalent to 
the universal representation of $C$. 
\item In general, if 
\[
p=\bigvee\{q: q\text{ is a projection in }C\cap \cK(\prH)\}
\]
 then $p\in C'\cap B(\prH)$ and 
$c\mapsto (1-p) c$
is equivalent to the universal representation of $C/(C\cap \cK(\prH))$ on $(1-p)\prH$. 
\end{enumerate}
\end{lemma}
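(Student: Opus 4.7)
The two parts can be treated uniformly by setting $I := C \cap \cK(\prH)$, so (1) is the case $I = 0$ and $p = 0$. For (2) I would first verify that $p \in C' \cap B(\prH)$: since $\cK(\prH)$ is an ideal of $B(\prH)$, $I$ is a two-sided ideal of $C$; being a separable \cstar-subalgebra of the compacts, $I$ is a $c_0$-direct sum of matrix algebras, so $p = \bigvee\{q : q \text{ projection in } I\}$ coincides with the support projection onto $\overline{I\prH}$. The invariance of $\overline{I\prH}$ under the left and right action of $C$ then forces $[c,p] = 0$ for every $c \in C$, so $\pi(c) := (1-p)c$ is a well-defined representation of $C$ on $(1-p)\prH$.

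Both parts reduce to the single realization claim:
\begin{quote}
(*)\; For every state $\varphi$ of $C$ with $\varphi|_I = 0$, there exists a unit vector $\xi \in (1-p)\prH$ with $(c\xi|\xi) = \varphi(c)$ for all $c \in C$.
\end{quote}
Granted (*), every state of $C/I$ is a vector state of $\pi$, so the universal property of $(C/I)^{**}$ yields a normal $*$-homomorphism $(C/I)^{**} \to \pi(C)''$ extending $\pi$; this is surjective by $\sigma$-weak density and injective because (*) detects every nonzero element via a normal state. The same claim identifies $\ker \pi = I$: the inclusion $I \subseteq \ker \pi$ follows from $a = ap$ for $a \in I$ (the range of $a^*$ lies in $\overline{I\prH}$), and for $c \in C \setminus I$ the Hahn--Banach theorem produces a state $\varphi$ of $C$ with $\varphi|_I = 0$ and $\varphi(c^*c) > 0$, whose vector-state realization via (*) supplies $\xi \in (1-p)\prH$ with $(1-p)c\xi \neq 0$.

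To prove (*) I would invoke countable saturation of $(\prB, \prH)$, provided by Proposition~\ref{L.Ax.1} together with countable saturation of ultraproducts. The type
\[
\bt_\varphi = \{\|\xi\| = 1\} \cup \{\|q\xi\| = 0 : q \text{ a projection in } I\} \cup \{(c\xi|\xi) = \varphi(c) : c \in C\}
\]
has parameters in the separable set $C$, so its realization in $(\prB, \prH)$ is equivalent to its consistency, which by \L o\'s' theorem reduces to $\e$-realizability of every finite subset. Concretely: given $c_1, \dots, c_n \in C$, positive $a_1, \dots, a_m \in I$ of norm at most $1$, and $\e > 0$, find $\xi \in \prH$, $\|\xi\| = 1$, with $(a_k\xi|\xi) < \e$ and $|(c_i\xi|\xi) - \varphi(c_i)| < \e$.

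This $\e$-realization is a Glimm-excision step and is where I expect the main difficulty. My plan is to work factorwise: lift $c_i, a_k$ to sequences $c_i(j), a_k(j) \in B_j$; use Lemma~\ref{L.GNS} and Hahn--Banach to extend $\varphi$ and transport it to a coherent family of functionals on $\cst(c_i(j), a_k(j)) \subseteq B_j$; then, using that each $B_j$ acts irreducibly on $H_j$, apply Kadison transitivity and Glimm's lemma inside each factor to construct $\xi(j) \in H_j$ that realizes the desired values on the $c_i(j)$ and is approximately orthogonal to the supports of the $a_k(j)$. The latter is available because $a_k \in \cK(\prH)$ forces the $a_k(j)$ to be essentially finite-rank in $H_j$ for $\cU$-many $j$, so Glimm's argument produces vectors far from their joint support. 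Setting $\xi := [\xi(j)]_\cU$ and invoking \L o\'s completes the approximation. The delicate point throughout is the uniform translation of the abstract condition ``$\varphi|_I = 0$'' into the fiberwise condition ``$\xi(j)$ is nearly orthogonal to $a_k(j)$'', and this is precisely where the irreducibility of the fiber representations does its essential work.
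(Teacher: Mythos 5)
Your overall architecture agrees with the paper's: reduce both parts to the claim that every state of $C$ annihilating $I=C\cap \cK(\prH)$ is realized as a vector state by a unit vector in $(1-p)\prH$, obtain that vector by countable saturation from $\e$-realizations of finite subtypes, and conclude quasi-equivalence with the universal representation of $C/I$ from the fact that every GNS representation of $C/I$ occurs as a subrepresentation of $c\mapsto (1-p)c$ (\cite[Theorem~3.8.2]{Pede:C*}). The verification that $p\in C'\cap B(\prH)$ and the identification $\ker\pi=I$ are also essentially right (your type should quote only a countable family of projections, e.g.\ a maximal orthogonal family $p_n$ in $I$ with $p=\bigvee_n p_n$, rather than all projections of $I$, but that is cosmetic).

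The gap is precisely in the step you flag as the main difficulty: the $\e$-realization of a finite subtype. Your plan --- lift $c_i, a_k$ to the fibers, ``transport $\varphi$ to a coherent family of functionals on $\cst(c_i(j),a_k(j))\subseteq B_j$'', and run Glimm's excision inside each $B_j$ --- breaks down at the transport step. A state on a separable subalgebra of $\prB$ need not arise as a $\cU$-limit of states on the fibers in any sense compatible with chosen liftings, and neither Lemma~\ref{L.GNS} nor Hahn--Banach manufactures such a family; without it there is no target value for the fiberwise vector states $(c_i(j)\xi(j)\,|\,\xi(j))$. The detour is also unnecessary. Glimm's lemma (\cite[Lemma~II.5.1]{Dav:C*}) applies \emph{directly} to the separable \cstar-subalgebra $C$ of $B(\prH)$ and the state $\varphi$ with $\varphi\rs I=0$ --- the ultraproduct structure of $\prH$ is irrelevant to this step and enters only through countable saturation. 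It yields an orthonormal sequence $(\xi_n)$ with $(c\xi_n|\xi_n)\to\varphi(c)$ for all $c\in C$; since an orthonormal sequence converges weakly to $0$, the $\xi_n$ are eventually almost orthogonal to each of the finitely many finite-rank projections $p_m$ appearing in the subtype. That is exactly the approximate realization you need, and it is how the paper argues. Replace the fiberwise plan with this single application of Glimm's lemma in $(\prB,\prH)$ and the rest of your argument goes through.
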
 

\begin{proof} 
For a state $\psi$ on $C$ the $(0,1)$-type $\bt_\psi(\xi)$ of a vector $\xi$ implementing $\psi$ 
consists of all conditions of the form  $(a\xi|\xi)=\psi(a)$ for  $a\in C$ and $\|\xi\|=1$. 

(1) 
Fix a state $\psi$ on $C$. By Glimm's Lemma (\cite[Lemma~II.5.1]{Dav:C*}) 
 type $\bt_\psi$ is  
consistent with the theory of $(\prB, \prH)$.   
By the separability of~$C$ and countable saturation, there exists a unit vector 
$\eta\in \prH$ such that 
$\psi(c)=(c\eta|\eta)$ for all $c\in C$. Let~$L$ be the 
norm-closure of $C\eta$ in $\prH$. Then $L$ is 
a reducing subspace for~$C$ and the induced representation of $C$
on $L$ is spatially isomorphic to the  
 GNS representation of $C$ corresponding to $\psi$. 
 Since $\psi$ was arbitrary, by \cite[Theorem~3.8.2]{Pede:C*} this completes the proof. 

(2) For every $a\in C$ we have $pa\in C\cap \cK(\prH)$ and therefore $pa(1-p)=0$ 
Similarly $(1-p)ap=0$, and hence $p\in C'\cap B(\prH)$.
Let $p_n$, for $n\in \bbN$, be a maximal family of orthogonal projections in $C\cap \cK(\prH)$. 
It is countable by the separability of $C$ and $p=\bigvee_n p_n$. 
Let $\psi$ be a state of $C$ that annihilates $C\cap \cK(\prH)$. 
Let $\bt_\psi^+(\xi)$ be the type obtained from $\bt_\psi(\xi)$ by adding to it all 
conditions of the form 
$p_n\xi=0$
for  $n\in \bbN$. 
By Glimm's Lemma (as stated in \cite[Lemma~II.5.1]{Dav:C*}) 
the type $\bt_\psi^+(\xi)$ is consistent, and by the countable saturation 
we can find $\xi_1\in \prH$ that realizes this type. Then $p\xi_1=0$ and therefore $\xi_1\in (1-p)\prH$. 
Therefore every GNS representation of $C/(C\cap \cK(\prH))$ is spatially equivalent to a subrepresentation of $c\mapsto (1-p)c$, and by \cite[Theorem~3.8.2]{Pede:C*} 
 this concludes the proof.  
 \end{proof}

\section{Second dual and Day's trick} \label{S.Day} 
The natural embedding of a 
\cstar-algebra $B$ into its second dual $B^{**}$ is rarely elementary. 
For example, having real rank zero 
is axiomatizable (\cite[Theorem~2.5.1]{Muenster}) and~$B^{**}$, being a von Neumann algebra, has real rank zero  while
$B$ may  have no nontrivial projections at all.   
  However,  we shall see that there is a  restricted degree of elementarity between  $B$ and $B^{**}$, and it   
    will suffice for our purposes.

We shall consider the  language $(\calLrep)_B$ obtained by adding new constants
for parameters in $B$   (see \S\ref{S.SR}). 
Term~$\alpha(x)$ in the extended language is \emph{linear} if it is of the form 
\[
\alpha(x)=xa+bx
\]
 for some parameters $a$ and $b$. 
 
A \emph{restricted $B$-linear formula} is a formula of the form 
\begin{enumerate}
\item \label{Eq.rBlf}
$\max_{j\leq m} \|\alpha_j(x)-b_j\|+\max_{j\leq n}(r_j\dminus \|\beta_j(x)\|)$
\pushcounter
\end{enumerate}
where 
\begin{enumerate}
\popcounter
\item all $b_j$, for $1\leq j\leq m$ are parameters in $B$, 
\item all $r_j$, for $1\leq j\leq n$ are positive real numbers, 
\item all $\alpha_j$ for $1\leq j\leq m$ 
are linear terms with parameters in $B$, and
\item all $\beta_j$ for $1\leq j\leq n$  
are linear terms with parameters in $B$.  
\pushcounter
\end{enumerate}
%
%
Proof of the following is based on an application of the Hahn--Banach separation theorem 
first used by Day (\cite{day1957amenable};  see also \cite[Section 2]{Ell:SomeIII} for some  uses of this method in the theory of 
\cstar-algebras). 

\begin{lemma} \label{L.**.1} Suppose $B$ is a unital \cstar-algebra and 
\[
\gamma(x)=\max_{j\leq m} \|\alpha_j(x)-b_j\|+\max_{j\leq n}(r_j\dminus \|\beta_j(x)\|).
\]
is a restricted $B$-linear formula. 
Then the following are equivalent. 
\begin{enumerate}
\popcounter
\item
\label{I.1.1}  $\inf_{x\in B}\gamma(x)=0 $
\item \label{I.1.2} $\inf_{x\in B^{**}}\gamma(x)=0$. 
\pushcounter
\end{enumerate}
\end{lemma}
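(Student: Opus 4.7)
The implication (1) $\Rightarrow$ (2) is immediate since $B \subseteq B^{**}$, so the content is in (2) $\Rightarrow$ (1), which I plan to prove by Day's trick: on a convex subset of a Banach space the norm closure coincides with the weak closure, and hence with the restriction of the weak-$*$ closure in the bidual. The difficulty is that $\gamma$ mixes upper-bound terms $\|\alpha_j(x)-b_j\|$ (which we need to be small) with lower-bound terms $\|\beta_j(x)\|$ (which we need to stay large). Norms on $B^{**}$ are not weak-$*$ continuous, so the lower-bound terms cannot be controlled directly by weak-$*$ approximation from $B$, and this is the principal obstacle.

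The key move is to linearize the lower-bound terms. Given $x \in B^{**}$ with $\gamma(x) < \e$, I would choose for each $j \le n$ a norm-one $\phi_j \in B^*$ with $|\phi_j(\beta_j(x))| > r_j - \e$, using the identity $\|F\| = \sup\{|F(\phi)| : \phi \in B^*,\ \|\phi\| \le 1\}$ valid for $F \in B^{**}$. Then form the bounded linear operator
\[
T \colon B \to B^m \oplus \bbC^n, \qquad T(y)=(\alpha_1(y),\dots,\alpha_m(y),\phi_1(\beta_1(y)),\dots,\phi_n(\beta_n(y))).
\]
Its bitranspose $T^{**}$ is weak-$*$ to weak-$*$ continuous and, using separate weak-$*$ continuity of multiplication in $B^{**}$ together with the canonical embedding $B^* \hookrightarrow (B^{**})^*$, sends $x$ to $(\alpha_1(x),\dots,\alpha_m(x),\phi_1(\beta_1(x)),\dots,\phi_n(\beta_n(x)))$. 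Setting $\mathbf{c}=(b_1,\dots,b_m,\phi_1(\beta_1(x)),\dots,\phi_n(\beta_n(x)))$, one reads off that $\|T^{**}(x)-\mathbf{c}\|<\e$ in, say, the max norm on $(B^{**})^m\oplus\bbC^n$.

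A direct Hahn--Banach argument then forces $\dist(\mathbf{c}, T(B))\le\e$: were the distance strictly larger, there would exist a norm-one $\Lambda\in (B^*)^m \oplus \bbC^n$ vanishing on $T(B)$ with $|\Lambda(\mathbf{c})| >\e$; extending $\Lambda$ weak-$*$ continuously to $(B^{**})^m\oplus \bbC^n$, it would vanish on $T^{**}(B^{**})$ by weak-$*$ density of $B$ in $B^{**}$, yielding
\[
|\Lambda(\mathbf{c})| = |\Lambda(\mathbf{c}) - \Lambda(T^{**}(x))| \le \|T^{**}(x)-\mathbf{c}\| < \e,
\]
a contradiction. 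Any $y\in B$ with $\|T(y)-\mathbf{c}\|<\e$ then satisfies $\|\alpha_j(y)-b_j\|<\e$ for all $j\le m$ and $|\phi_j(\beta_j(y))| > r_j-2\e$, whence $\|\beta_j(y)\| > r_j-2\e$ since $\|\phi_j\|\le 1$. Thus $\gamma(y)<3\e$, and since $\e$ was arbitrary, $\inf_{y\in B}\gamma(y)=0$.

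The main obstacle, as noted, is the asymmetry between upper- and lower-bound terms; the linearization via the functionals $\phi_j$ is the essential step, after which the proof is a clean application of Hahn--Banach and weak-$*$ density of $B$ in $B^{**}$.
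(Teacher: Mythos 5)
Your proof is correct and is essentially the paper's own argument (Day's trick): both handle the terms $\|\alpha_j(x)-b_j\|$ by convexity together with the Hahn--Banach theorem and weak$^*$-density of the unit ball of $B$ in that of $B^{**}$, and both linearize the lower bounds $\|\beta_j(x)\|>r_j-\e$ by choosing norming functionals $\varphi_j\in B^*$, which turns these conditions into weak$^*$-open ones. The only difference is cosmetic: you package everything into a single operator $T\colon B\to B^m\oplus\bbC^n$ and one separation argument, whereas the paper separates the convex set $X_1=\{x\in B_{\leq 1}:\max_{j\leq m}\|\alpha_j(x)-b_j\|\leq\e\}$ from the weak$^*$-open neighbourhood $U$ determined by the $\varphi_j$.
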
 

\begin{proof} 
\eqref{I.1.1} implies  \eqref{I.1.2}
because  $B$ is isomorphic to a unital subalgebra of~$B^{**}$ 
and therefore
$\inf_{x\in B^{**}}\gamma(x)\leq \inf_{x\in B}\gamma(x)$. 

Assume  
 \eqref{I.1.2} holds. 
Let $a_j$ and $c_j$, for $j\leq n$, be such that 
 $\alpha_j(x)=a_jx+xc_j$. 
 For each $j$ we shall identify $\alpha_j$ with its interpretation, linear map from $B$ to $B$. 
 The second adjoint $\alpha_j^{**}\colon B^{**}\to B^{**}$ 
also satisfies $\alpha_j^{**}(x)=a_j x+x c_j$, hence~$\alpha_j^{**}(x)$ is the interpretation of the term $\alpha_j(x)$ in~$B^{**}$. 
The set 
\[
Z:=\langle \alpha_j(x): x\in B_{\leq 1} \rangle
\]
is, being an image of a convex set under a linear map, 
 a convex subset of~$B^m$ and by the Hahn--Banach theorem 
\[
Z_1:=B^m\cap \langle \alpha_j(x): x\in B^{**}_{\leq 1}\rangle
\]
is included in the norm-closure of $Z$. By \eqref{I.1.2} we have $(b_1,\dots, b_m)\in Z_1$. 

Fix $\e>0$ and let 
\[
\textstyle X_1:=\{x\in B_{\leq 1}:\max_{j\leq m} \| \alpha_j(x)-b_j\|\leq \e\}.
\] 
By the above this is a  convex subset of the unit ball of $B$ and (by using the Hahn--Banach separation theorem again) the weak$^*$-closure of $X_1$ in $B^{**}$  is equal to $\{x\in B^{**}_{\leq 1} :
\max_{j\leq m} \| \alpha_j(x)-b_j\|\leq \e\}$.

Let $c\in B^{**}_{\leq 1}$ be such that $\gamma(c)< \e$. Then $c$ belongs to the weak$^*$-closure of $X_1$. 
For each $j\leq n$ we have $\|\beta_j(c)\|> r_j-\e$. Fix a norming functional 
$\varphi_j\in B^*$ 
 such that $\|\varphi_j\|=1$ and  $\varphi_j(\beta_j(c))> r_j-\e$. 
Then 
\[
U:=\{x\in B^{**}: \varphi_j(\beta_j(x))> r_j-\e\text{ for all }j\}
\]
is a weak$^*$-open neighbourhood of $c$ and, as $c$ belongs to the weak$^*$-closure of~$X_1$, 
 $U\cap X_1$ is a nonempty subset of $B_{\leq 1}$. 
Any $b\in U\cap X_1$ satisfies $\gamma(b)< \e$. As $\e>0$ was arbitrary, this shows that 
\eqref{I.1.1} holds. 
\end{proof}


%

 In the following
 $A\subseteq \prB$ is identified with a subalgebra of $B(\prH)$. 

\begin{lemma} \label{L.**.2} 
Suppose $(B_j,H_j)$ is an irreducible representation of $B_j$ on~$H_j$ for $j\in \bbN$
and $A$ is a separable subalgebra of $\prB$. 
\begin{enumerate}
\item 
For every $b\in \prB$ we have 
$b\in (A'\cap B(\prH))'$ if and only if $b\in (A'\cap \prB)'$.  
Equivalently, 
\[
\textstyle (A'\cap B(\prH))'\cap \prod_{\cU} B_j=(A'\cap \prB)'\cap \prod_{\cU} B_j.
\]
\item $\WOT{A}\cap \prB=(A'\cap \prB)'$. 
\end{enumerate}
\end{lemma}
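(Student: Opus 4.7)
The plan is to reduce (2) to (1) via von Neumann's bicommutant theorem, and to prove (1) by combining Lemma~\ref{L.**.1} (Day's trick) applied directly to $\prB$ with the fact that $\prB$ is SOT-dense in $B(\prH)$.

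For (2), the inclusion $\WOT{A} \cap \prB \subseteq (A' \cap \prB)'$ is immediate from $A' \cap \prB \subseteq A'$ together with $\WOT{A} = A''$. Conversely, given $b \in (A' \cap \prB)'$, part (1) yields $b \in (A' \cap B(\prH))' = A'' = \WOT{A}$. In (1) the forward inclusion is immediate, so I concentrate on the converse, argued contrapositively: assume $b \in \prB$ with $b \notin \WOT{A}$, and seek $c \in A' \cap \prB$ with $[c, b] \neq 0$. By the bicommutant theorem there is a self-adjoint contraction $T \in A' \cap B(\prH)$ with $\delta := \|[T, b]\| > 0$.

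The structural input I need is that $\prB$ acts irreducibly on $\prH$. Irreducibility of each $\pi_j$ together with Kadison's transitivity theorem allow, for any two unit-vector representatives $\xi = (\xi_j)$ and $\eta = (\eta_j)$ in $\prH$, a uniformly bounded choice of $a_j \in B_j$ satisfying $a_j \xi_j = \eta_j$; the ultraproduct element $a = (a_j) \in \prB$ then sends $\xi$ to $\eta$. Hence $\prB' = \bbC$ in $B(\prH)$, $\WOT{\prB} = B(\prH)$, and the canonical extension $\pi^{**} \colon \prB^{**} \twoheadrightarrow B(\prH)$ is surjective. Letting $z_0 \in \prB^{**}$ be the central support of $\ker \pi^{**}$, the restriction of $\pi^{**}$ to $(1 - z_0)\prB^{**}$ is an isometric $\ast$-isomorphism onto $B(\prH)$. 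Let $T' \in (1 - z_0)\prB^{**}$ be the preimage of $T$; then $\|T'\| = 1$, $\|[T', b]\| = \delta$, and for every $a \in A$ the commutator $[T', a]$ lies in $(1 - z_0)\prB^{**}$ (by centrality of $z_0$) and also in $\ker \pi^{**}$, forcing $[T', a] = 0$ exactly.

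For each finite $F \subseteq A$ and $\e > 0$ the formula $\gamma(x) = \max_{a \in F}\|[x, a]\| + (\delta \dminus \|[x, b]\|)$ is restricted $\prB$-linear, and $\gamma(T') = 0$ in the unit ball of $\prB^{**}$. Lemma~\ref{L.**.1} then produces $c_{F, \e} \in \prB$ in the unit ball with $\gamma(c_{F, \e}) < \e$, i.e.\ $\max_{a \in F}\|[c_{F, \e}, a]\| < \e$ and $\|[c_{F, \e}, b]\| > \delta - \e$. Since $A$ is separable, the countable type asserting $\|c\| \leq 1$, $[c, a] = 0$ for every $a \in A$, and $\|[c, b]\| \geq \delta/2$ is consistent, so the countable saturation of $\prB$ realizes it by an element $c \in A' \cap \prB$ with $[c, b] \neq 0$, contradicting $b \in (A' \cap \prB)'$. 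The main technical obstacle is the lifting step: establishing the isometric $\ast$-isomorphism $(1 - z_0)\prB^{**} \cong B(\prH)$ and the exact vanishing $[T', a] = 0$, which in turn rests on the fibered Kadison transitivity argument that secures $\WOT{\prB} = B(\prH)$.
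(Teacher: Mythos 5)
Your proof is correct, and in outline it matches the paper's: part (2) is reduced to part (1) via von Neumann's bicommutant theorem, and part (1) is proved by taking a witness $q\in A'\cap B(\prH)$ with $[q,b]\neq 0$, transporting it into a second dual where Lemma~\ref{L.**.1} applies with the commutators $[x,a]$ vanishing \emph{exactly}, and then realizing the resulting consistent $(1,0)$-type over $A\cup\{b\}$ by countable saturation. Where you genuinely diverge is the transport step. The paper compresses $q$ by a projection with separable range, invokes the Downward L\"owenheim--Skolem theorem to produce a separable elementary submodel $(D,K)$ of $(\prB,\prH)$ with $\cst(A,b)\subseteq D$, identifies $p_K$ with a minimal central projection of $D^{**}$ so that the compressed witness lies in $D^{**}$, applies Lemma~\ref{L.**.1} to the separable algebra $D$, and transfers the conclusion to $\prB$ using that $\gamma_F$ is quantifier-free. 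You instead go upward: since the ultraproduct representation is irreducible (you rederive this by hand from Kadison transitivity with uniform norm bounds, though Lemma~\ref{L.Ax.2} together with \L o\'s' theorem already gives it), $B(\prH)=\WOT{\prB}$ is isometrically isomorphic to the central corner $(1-z_0)\prB^{**}$, so $T$ lifts to $T'\in\prB^{**}_{\leq 1}$ with $[T',a]=0$ for all $a\in A$ and $\|[T',b]\|=\delta$, and Lemma~\ref{L.**.1} is applied directly to the nonseparable algebra $\prB$. This is legitimate --- nothing in the statement or proof of Lemma~\ref{L.**.1} uses separability, only Hahn--Banach and weak$^*$ density of the unit ball --- and it buys a shorter argument that dispenses with the L\"owenheim--Skolem step and the separable-range compression altogether. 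What the paper's route buys in exchange is that the witness is located inside a single, concretely described minimal central summand $B(K)$ of the second dual of a \emph{separable} algebra, so one never has to discuss the central cover of the identity representation of the full ultraproduct; but your central-cover computation ($[T',a]\in z_0\prB^{**}\cap(1-z_0)\prB^{**}=\{0\}$) is carried out correctly, so this is a matter of taste rather than of rigour.
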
 

\begin{proof} (1) Since $\prB\subseteq B(\prH)$ we clearly have 
$(A'\cap B(\prH))'\subseteq 
(A'\cap \prB)'$.  
In order to prove the converse inclusion, fix $b\in \prB$ and suppose that 
   there  exists $q\in A'\cap B(\prH)$ such that $\|[q,b]\|=r> 0$. We need to  find  
 $d\in A'\cap \prB$ satisfying $[d,b]\neq 0$. 
 
Consider the $(1,0)$-type $\bt(x)$ consisting of all  conditions of the form 
\[
\|[x,b]\|\geq r\quad\text{and}\quad
[x,a]=0
\]
 for  $a\in A$.  
This type is satisfied in $B(\prH)$ by $q$. 
Since all formulas in~$\bt(x)$  are quantifier-free, 
their interpretation is unchanged when passing to a larger  algebra.

Fix a finite subset of $\bt(x)$ and let $F\subseteq A$ be the  set of parameters occurring in this subset. 
Then 
\[
\gamma_F(x):=\inf_x \max_{a\in F} \|[x,a]\|+(r\dminus \|[x,b]\|)
\]
is a restricted $\prB$-linear formula.  Since $A$ is separable, we can find 
   projection $p$ in $\cst(A,q)'\cap B(\prH)$ with separable range 
such that $q_1:=pq$ 
satisfies $\|[q_1,b]\|=r$.  
To find $p$ as required, take a separable elementary submodel $(C,H_0)$ of $(B(\prH), \prH)$
such that $A\subseteq C$ and let $p$ be the projection to $H_0$.

By the Downward L\"owenheim--Skolem Theorem (\cite[Theorem~4.6]{FaHaSh:Model2})
 there exists  
 a separable elementary submodel $(D,K)$ 
of $(\prB, \prH)$
such that $\cst(A,b)\subseteq D$ and  the range of $p$ is included in $K$. 
Part (2) of  Lemma~\ref{L.Ax.2} and \L o\'s' Theorem imply that 
      $\WOT{\prB}=B(\prH)$ 
      and   (with $p_K$ denoting the projection to $K$) 
that $\WOT{p_K D p_K} =B(p_K \prH)$. 
We can therefore identify~$p_K$ with a minimal central projection  in~$D^{**}$. 
Via this identification  we have 
$q_1\in D^{**}$. 
    Since $\gamma_F(q_1)=0$,     Lemma~\ref{L.**.1} 
implies 
 $ \inf_{x\in D, \|x\|\leq 1}\gamma_F(x)=0$ 
and 
 $\inf_{x\in \prB, \|x\|\leq 1}\gamma_F(x)=0$
 (since $\gamma_F$ is  quantifier-free). 

 Since $F$ was an arbitrary finite subset of $A$  
  type    $\bt(x)$ is consistent with the theory of $\prB$. 
Since $A$ is separable, by the countable saturation there exists $d\in A'\cap \prB$ satisfying 
$\|[d,b]\|\geq r$. 

(2) By the von Neumann bicommutant theorem $\WOT{A}=(A'\cap B(\prH))'$ and 
therefore (1) implies $\WOT{A}\cap \prB=(A'\cap \prB)'$. 
\end{proof}

\section{Proof of Theorem~\ref{T1}}
Suppose $(B_j,H_j)$ is a faithful 
 irreducible representation of $B_j$ on~$H_j$ for $j\in \bbN$ 
and  $A$ is  a separable subalgebra of $\prB$. 
By  Lemma~\ref{L.Ax.2}
  $(\prB, \prH)$ is an irreducible faithful representation of 
$\prB$. 

By (2) of Lemma~\ref{L.**.2}  we have $\WOT{A}\cap \prB=(A'\cap \prB)'$. 
Since $A\subseteq (A'\cap \prB)'$, it remains to prove $(A'\cap \prB)'\subseteq A$. 
Fix $b\in \prB$ such that $r:=\dist(b,A)>0$. 
 By (1) of Lemma~\ref{L.**.2} it suffices to find $d\in A'\cap B(\prH)$ such that $[d,b]\neq 0$. 
Let 
\[
C:=\cst(A,b).
\]

\begin{lemma} \label{L.Key} With $\prB$, $A$, $b$, $C$ and $r$ as above, 
there exists a representation $\pi\colon  C/(C\cap \cK(\prH))\to B(K)$ 
and $q\in \pi(A)'\cap B(K)$ such that $[q,\pi(b)]\neq 0$. 
\end{lemma}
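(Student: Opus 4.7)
The plan is to reduce Lemma~\ref{L.Key} to a direct application of Lemma~\ref{L.b} after passing to the quotient $C/J$, where $J := C \cap \cK(\prH)$ is a closed two-sided ideal of $C$. Let $\rho\colon C \to C/J$ denote the quotient map, and write $\bar A := \rho(A)$ and $\bar b := \rho(b)$.

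The crucial preliminary step is to establish the separation $\bar b \notin \bar A$, equivalently $b \notin A + J$. This step cannot be skipped: in its absence every representation $\pi$ of $C/J$ would satisfy $\pi(b) \in \pi(A)$, forcing $[q,\pi(b)] = 0$ for every $q \in \pi(A)'$ and making the conclusion unobtainable. Suppose toward a contradiction that $b = a + k$ with $a \in A$ and $k \in J$; since $\dist(b,A) = r > 0$ we have $k \neq 0$, so $k$ is a nonzero compact operator on $\prH$ that lies in the ultraproduct $\prB$. To contradict this, I would exploit the ultraproduct structure of $(\prB,\prH) = \prod_{\cU}(B_j,H_j)$ together with the irreducibility of each $B_j \to B(H_j)$: the remark at the end of \S\ref{S.SR} that nontrivial compact operators with separable infinite-dimensional range cannot lie in ultrapowers of the form $B(H)^{\cU}$ should, via a variant argument applied to $\prB \cap \cK(\prH)$, force $k = 0$ (and in any finite-dimensional degenerate case $C = \prB$ is itself finite-dimensional, where the statement reduces to the classical bicommutant theorem and can be handled separately).

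Once the separation is secured, $\bar A$ is a proper unital $C^*$-subalgebra of $C/J = \cst(\bar A, \bar b)$. Apply Lemma~\ref{L.b} to the inclusion $\bar A \subsetneq C/J$: this yields a representation $\pi\colon C/J \to B(K)$ and a projection $q \in \pi(\bar A)' \cap B(K)$ with $[q,\pi(\bar b)] \neq 0$. Under the canonical identification $\pi(A) = \pi(\bar A)$ and $\pi(b) = \pi(\bar b)$ (i.e., composing with $\rho$), this is the required $\pi$ and $q$.

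The main obstacle is the separation step. An essentially equivalent alternative route is to re-examine the Hahn--Banach argument of Lemma~\ref{L.b} and insist that the separating functional $\varphi$ on $C$ additionally annihilate $J$, so that the resulting representation factors through $C/J$; the feasibility of choosing such $\varphi$ is precisely the condition $\dist(b, A + J) > 0$, so the difficulty reappears in the same form. Either way, the work of the proof is concentrated in ruling out nonzero compact operators in $\prB$ of the form $b - a$ with $a \in A$.
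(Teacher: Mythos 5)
Your reduction of the lemma to Lemma~\ref{L.b} is sound as far as it goes: if one knows $b\notin A+J$ with $J=C\cap\cK(\prH)$, then $\rho(A)$ is a proper unital subalgebra of $C/J=\cst(\rho(A),\rho(b))$ and Lemma~\ref{L.b} delivers exactly the required $\pi$ and $q$. You have also correctly identified that the entire content of the lemma is the separation $\dist(b,A+J)>0$. The gap is that your proposed proof of this separation rests on a false premise: it is not true that a nonzero compact operator on $\prH$ cannot belong to $\prB$. The remark at the end of \S\ref{S.SR} only says that no projection with separable \emph{infinite-dimensional} range lies in $B(H)^{\cU}$; finite-rank operators are not excluded, and in the relevant examples they are abundant. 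For instance, if $B_j=M_j(\bbC)$, or if $B_j$ is the unitization of $\cK(H)$ acting on a fixed $H$, then a sequence of rank-one projections defines a rank-one projection on $\prH$ lying in $\prB$. The whole difficulty of the argument is precisely that $A\cap\cK(\prH)$ and $C\cap\cK(\prH)$ can be nonzero; the example following Lemma~\ref{L.b} is there to make this point. Your degenerate-case remark (that $C=\prB$ would be finite-dimensional) also does not arise, since $C$ is merely a separable subalgebra of $\prB$.

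The actual proof establishes the separation by a longer route: it takes a maximal orthogonal family of minimal projections in $A\cap\cK(\prH)$ with partial joins $p_n$, disposes of the case $p_nbp_n\notin A$ by the Kadison Transitivity Theorem, and otherwise proves $\dist(\theta(b),\theta(A))=r$ for the quotient $\theta$ by $A\cap\cK(\prH)$ using the auxiliary elements $(1-p_m)a(1-p_m)+p_mbp_m\in A$ together with a type realized by countable saturation; the residual case in which $(1-p)C(1-p)$ still contains a nonzero finite-rank projection requires a further argument via the Kaplansky Density Theorem and yet another saturation argument. None of this is captured by an appeal to the absence of compacts in the ultraproduct, so the proof as proposed does not go through.
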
 

Since the proof of 
Lemma~\ref{L.Key} is on the long side, 
let us show how it completes the proof of Theorem~\ref{T1}. 
Lemma~\ref{L.**} implies that if 
\[
\textstyle p=\bigvee\{q: q\text{ is a projection in }C\cap \cK(\prH)\}
\]
 then 
$p\in C'\cap B(\prH)$ and 
$c\mapsto (1-p) c$
is equivalent to the universal representation of $C/(C\cap \cK(\prH))$ on $(1-p)\prH$. 
Therefore $q$ as in the conclusion of Lemma~\ref{L.Key} can be found in $A'\cap B(\prH)$, 
implying $b\notin (A'\cap B(\prH))'$. By Lemma~\ref{L.**.2} this implies $b\notin (A'\cap \prB)'$,
reducing the proof of Theorem~\ref{T1}  to the following.

\begin{proof}[Proof of Lemma~\ref{L.Key}]
An easy  special case may be worth noting. If  $C\cap \cK(\prH)=\{0\}$ then  
 Lemma~\ref{L.b} implies the  existence of   
 a representation 
 $\pi\colon  C\to B(K)$
and $q\in \pi(A)'\cap B(K)$ such that $[q,\pi(b)]\neq 0$.

In the general case, 
let  $q_n$, for $n\in J$, be an enumeration of a maximal orthogonal set of minimal projections in $A\cap \cK(\prH)$. 
 The index-set $J$  is countable (and possibly finite or even empty) since $A$ is separable. Let $p_n:=\bigvee_{j\leq n} q_j$. 

Suppose for a moment that there exists $n$ such that $p_n b p_n\notin A$. 
Since the range of $p_n$ is finite-dimensional, by von Neumann's Bicommutant Theorem 
 (\cite[\S I.9.1.2]{Black:Operator}).
 and  
the  Kadison Transitivity Theorem   (\cite[Theorem II.6.1.13]{Black:Operator})  
there exists $d\in A'\cap B(p_n\prH)$ such that $[d,b]\neq 0$.
Lemma~\ref{L.**.2} now implies  
 $p_n b p_n\notin (A'\cap \prB)'$ and $b\notin  (A'\cap \prB)'$. 

We may therefore assume $p_n b p_n\in A$ for all $n$. 
Let $p=\bigvee_n p_n$.  Lemma~\ref{L.**}  (2) implies 
 $p\in A'\cap B(\prH)$, 
and we may therefore assume $[b,p]=0$. Since $C=\cst(A,b)$ this implies $p\in C'\cap B(\prH)$. 
Since $p_n b p_n \in A$ for all $n$ we have $A\cap\cK(\prH)pCp\cap \cK(\prH)$. 
If $c\in C$ then for every $n$ we have $p_n c (1-p)=0$ and similarly 
$(1-p) c p_n=0$. Since the sequence $p_n$, for $n\in \bbN$, is an approximate unit for $A\cap \cK(\prH)$, 
the latter is an ideal 
of $C$. 
Let $\theta\colon C\to C/(A\cap \cK)$ be the quotient map. We claim that 
$\dist(\theta(b), \theta(A))=\dist(b,A)>0$.   
Fix $a\in A$. We need to show that $\|\theta(a-b)\|\geq r$. 

Consider the $(0,1)$-type $\bt(\xi)$ consisting of  all conditions of the form 
\[
\|\xi\|=1,\quad  \|(a-b)\xi\|\geq r,  \quad p_n \xi=0, 
\]
for  $n\in J$. 
To see that this type is consistent fix a finite $F\subseteq J$. Let $m\geq \max(F)$
and  
\[
a':=(1-p_m)a(1-p_m) + p_m b p_m. 
\]
As both summands belong to $A$, $a'\in A$ and therefore $\|a'-b\|\geq r$. 
Fix $\e>0$. 
If $\xi\in \prH$ is a vector of norm $\leq 1$  such that $\|(a'-b)\xi\|>r-\e$ then $\xi'=(1-p_m)\xi$ 
 has the same property since $(a'-b)p_m=0$. Since $\e>0$ was arbitrary,  
 $\bt(\xi)$ is consistent. By the countable saturation there exists a unit vector $\xi\in \prH$
which realizes $\bt(\xi)$. Since $p_n \xi=0$ for all $n$ we have $p\xi=0$ and 
therefore $\|\theta(a-b)\|\geq \|(1-p)(a-b)(1-p)\|\geq r$. 
Since $a\in A$ was arbitrary, we conclude that $\dist(\theta(b), \theta(A))=r$.

Suppose for  a moment that 
  $(1-p)C(1-p)\cap \cK(\prH)=\{0\}$.  By (2) of Lemma~\ref{L.**} 
the  representation 
\[
\textstyle C\ni c\mapsto (1-p)c\in B((1-p)\prH)
\]
 is equivalent to 
the universal representation 
of $C$.  Hence by  Lemma~\ref{L.b} 
 we can find $d\in (1-p)(A'\cap B(\prH))$ that does not commute with $b$, and by the above 
 this concludes the proof in this case. 

We may therefore assume that $(1-p)C(1-p)\cap \cK(\prH)\neq \{0\}$. 
By the spectral theorem for self-adjoint compact operators and continuous functional calculus, 
there exists a nonzero projection $q\in (1-p) C(1-p)$ of finite rank. 
Fix  $c\in C$ such that $(1-p)c(1-p)=q$.

By Lemma~\ref{L.**.2} it suffices to find 
 $q\in A'\cap (1-p)B(\prH)(1-p)$ such that $[q,c]\neq 0$. 
Suppose otherwise, that $c\in (A'\cap \prB)'$. 
(2) of  Lemma~\ref{L.**.2}  implies that $c\in \WOT{A}$.
By the Kaplansky Density Theorem (\cite[Theorem I.9.1.3]{Black:Operator}) 
 there is a net of positive contractions in $A$ converging to $c$ in the weak operator topology. 
By the continuous functional calculus and the Kadison Transitivity Theorem (\cite[Theorem II.6.1.13]{Black:Operator})  
we may choose this net among the members of 
 \[
Z:= \{a\in A_+: \|a\|=1, q a q=q\}. 
 \]
 Consider the $(0,1)$-type  $\bt_1(\xi)$ consisting of all conditions of the form 
\[
\|\xi\|=1,\quad a\xi=\xi,\quad q\xi=0, \quad p_n \xi=0
 \]
 for  $n\in \bbN$ and $a\in Z$. 

 We  claim that  $\bt_1(\xi)$ is consistent.  
 Fix  $\e>0$ and $a_1, a_2,\dots, a_n$ in $Z$. 
 Let 
 \[
 a:=a_1 a_2\dots a_{n-1} a_n a_{n-1} \dots a_2 a_1. 
 \]
 Then $a\in Z$ and $q\leq a$. By the choice of $p$ the operator 
    $(1-p)(a-s)_+$ is not compact for any $s<1$. 
Therefore  there exists a unit vector 
$\xi_0$ in $(1-p-q)\prH$ such that $\|\xi_0-a\xi_0\|$ is arbitrarily small. 
By the countable saturation there exists a unit vector $\xi_1\in (1-(p+q))\prH$ such that 
$a\xi_1=\xi_1$. As each $a_j$ is a positive contraction, we have $a_j\xi_1=\xi_1$ for $1\leq j\leq n$.  
 Since $a_1,\dots, a_n$ was an arbitrary subset of $Z$, this shows that $\bt_1(\xi)$ 
is consistent. 

Since $Z$ is separable, by the countable saturation
there exists $\xi\in \prH$ realizing $\bt_1(\xi)$. 
Then $\xi$ is a unit vector   in  $(1-(p+q))\prH$ such that $a\xi=\xi$ for all $a\in Z$. 
As $c\xi=0$, 
 this  contradicts   $c$ being in the weak operator topology closure of $Z$. 

Therefore there exists  $q\in A'\cap (1-p)B(\prH)(1-p)$ such that $[q,c]\neq 0$. 
Since $c\in C=\cst(A,b)$ we have  $[q,b]\neq 0$, and this concludes the proof. 
\end{proof}

\section{Concluding remarks}

In the following infinitary form  of the  Kadison Transitivity Theorem 
 $p_K$ denotes projection to a closed subspace $K$ of $\prH$. 

\begin{prop}\label{L.Kadison+} 
Assume $(\prB, \prH)$ is an ultraproduct of faithful and irreducible 
representations  of unital \cstar-algebras. 
Also assume $K$ is a separable closed subspace of $\prH$
and $T\in B(K)$. 
\begin{enumerate}
\item There exists $b\in \prB$ such that $\|b\|=\|T\|$ and $p_K b p_K=T$. 

\item If $T$ is self-adjoint (or positive, or unitary in $B(K)$) then $b$ can be chosen 
to be self-adjoint (or positive, or unitary in $B(\prH)$).   
\end{enumerate}
\end{prop}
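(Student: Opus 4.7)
The plan is to produce $b$ by realizing a countable type in the countably saturated structure $(\prB,\prH)$. By Lemma~\ref{L.Ax.2} together with \L o\'s' Theorem, $(\prB,\prH)$ is itself a faithful irreducible representation of $\prB$, so the classical norm-controlled Kadison Transitivity Theorem is available inside it; countable saturation will promote its finitary conclusion to the desired infinitary one.

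For part (1), fix countable norm-dense sequences $\{\xi_n\}$ and $\{\eta_m\}$ in the unit ball of $K$, and consider the $(1,0)$-type $\bt(x)$ over these parameters consisting of
\[
\|x\|\leq \|T\|\quad\text{together with all conditions}\quad (x\xi_n\mid\eta_m)=(T\xi_n\mid\eta_m).
\]
Any $b\in\prB$ realizing $\bt(x)$ satisfies $(b\xi\mid\eta)=(T\xi\mid\eta)$ for all $\xi,\eta\in K$ by density, hence $p_Kbp_K=T$; the inequality $\|b\|\geq \|p_Kbp_K\|=\|T\|$ combined with the upper bound in $\bt(x)$ then forces $\|b\|=\|T\|$.

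To verify consistency, take a finite subset $F\subseteq \bt(x)$, let $V\subseteq K$ be the finite-dimensional span of the vectors $\xi_n$ and $\eta_m$ appearing in $F$, and set $T_V:=p_V T p_V\in B(V)$. The norm-controlled Kadison Transitivity Theorem applied to the irreducible representation $(\prB,\prH)$ yields $a\in\prB$ with $\|a\|\leq \|T_V\|\leq \|T\|$ and $a\xi=T_V\xi$ for every $\xi\in V$; this gives $(a\xi_n\mid\eta_m)=(T_V\xi_n\mid\eta_m)=(T\xi_n\mid\eta_m)$ whenever $\xi_n,\eta_m\in V$. Hence every finite subset of $\bt(x)$ is realized exactly, and countable saturation produces the required $b$.

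Part (2) is obtained by appending to $\bt(x)$ the appropriate closed condition — $\|x-x^*\|=0$ for self-adjointness, the standard axiomatizable condition expressing $x\geq 0$ for positivity, or $\|x^*x-1\|=\|xx^*-1\|=0$ for unitarity — and invoking the corresponding refinement of Kadison transitivity for consistency. The self-adjoint and positive cases go through verbatim, since $T_V=p_V T p_V$ inherits self-adjointness resp.\ positivity from $T$. The main obstacle is the unitary case, because $p_V T p_V$ is typically not unitary on $V$. I would circumvent this by choosing any unitary extension $\tilde T\in B(\prH)$ of $T$ (for instance $T\oplus U$ for any unitary $U$ on $K^\perp$) and applying Kadison's unitary refinement inside $(\prB,\prH)$ to obtain a unitary $a\in\prB$ with $a\xi_n=\tilde T\xi_n=T\xi_n$ for the finitely many $\xi_n$ appearing in $F$, which again yields $(a\xi_n\mid\eta_m)=(T\xi_n\mid\eta_m)$. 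The rest of the argument is unchanged, and one notes a posteriori that unitarity of $b$ together with unitarity of $T$ on $K$ automatically forces $bK=K$ and $b|_K=T$, consistently with $p_Kbp_K=T$.
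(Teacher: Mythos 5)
Your proposal is correct and follows essentially the same route as the paper: write the conclusion as a countable type over a dense sequence of vectors in $K$, verify finite consistency via the norm-controlled Kadison Transitivity Theorem applied in the irreducible representation $(\prB,\prH)$ (irreducible by Lemma~\ref{L.Ax.2} and \L o\'s), and realize the type by countable saturation. The only differences are cosmetic --- you encode the conditions via matrix coefficients $(x\xi_n\mid\eta_m)$ where the paper uses $\|p_n(x-T)p_n\|=0$ for finite-dimensional projections $p_n\nearrow p_K$ --- and you supply the detail for the unitary case (extending $T$ to a unitary of $\prH$ before applying Pedersen's refinement) that the paper leaves implicit.
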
 

\begin{proof} (1) is a consequence of the Kadison transitivity theorem and countable saturation of
the structure $(\prB, \prH)$. Let $p_n$, for $n\in \bbN$, be an increasing sequence of finite-dimensional projections 
converging to $p_K$ in the strong operator topology and let $a_n$, for $n\in \bbN$, be a dense subset of $A$.  
We need to check that the type $\bt(x)$ consisting of all conditions of the form  
\[
\|p_n (x-T)p_n\|=0, 
 \qquad \|x\|=\|T\|
\]
for $n\in \bbN$ 
is consistent. 
Since the representation of $\prB$ on $\prH$ is irreducible by Lemma~\ref{L.Ax.2}, 
every finite subset of $\bt(x)$  is consistent by the Kadison Transitivity Theorem.  
We can therefore find $b\in \prB$ that satisfies $\bt(x)$  and therefore $p_Kbp_K=T$ and $\|b\|=\|T\|$. 

  (2)  If $T$ is self-adjoint, add the condition $x=x^*$ to $\bt(x)$. 
  By    
  \cite[Theorem~2.7.5]{Pede:C*}
the corresponding type is consistent, and the assertion again follows by countable saturation. The 
case when $T$ is a unitary also uses   \cite[Theorem~2.7.5]{Pede:C*}. 
\end{proof}

An important  consequence of full Voiculescu's theorem is that 
any two  unital representations $\pi_j\colon A\to B(H)$ 
of a separable unital \cstar-algebra $A$ on $H$ 
such that $\ker(\pi_1)=\ker(\pi_2)$ and $\pi_1(A)\cap \cK(H)=\pi_2(A)\cap \cK(H)=\{0\}$
are approximately unitarily equivalent (\cite[Corollary~1.4]{voiculescu1976non}). 
The analogous statement is in general false for  the ultraproducts. 
Let  $B_n= M_n(\bbC)$ for $n\in \bbN$ and let $A=\bbC^2$. 
Group $K_0(\prod_{\cU} M_n(\bbC))$ is isomorphic to $\bbZ^{\bbN}$ with the 
natural ordering and the identity function $\id$ as the order-unit. 
Every unital representation of~$A$ corresponds to an element of 
this group that lies between $0$ and $\id$, and there are~$2^{\aleph_0}$ inequivalent representations. 
Also, 
$K_0(\prod_{\cU} M_n(\bbC))$ is isomorphic to the ultraproduct $\prod_{\cU}\bbZ $
and $2^{\aleph_0}$  of these extensions remain inequivalent even after passing to the ultraproduct.

We return to G.~K. Pedersen's question 
 (\cite[Remark~10.11]{Pede:Corona}),  whether a bicommutant theorem 
$(A'\cap M(B)/B)'=A$ is true for a separable unital subalgebra $A$ of a corona $M(B)/B$ of a $\sigma$-unital \cstar-algebra $B$? 
A simple and unital  \cstar-algebra $C$ is \emph{purely infinite} if for every nonzero $a\in C$ there are $x$ and $y$ 
such that $xay=1$.

\begin{question} \label{Q2} Suppose $C$ is a unital, simple, purely infinite, and separable
 and $A$ is a unital subalgebra 
of $C$. Is $(A'\cap C^{**})'\cap C=A$? 
\end{question}

Let us prove that a positive answer to Question~\ref{Q2} would imply a positive answer to Pedersen's question. 
If $A$ is a separable and unital subalgebra of $M(B)/B$ and $b\in (M(B)/B)\setminus A$, 
then there exists a separable elementary submodel $C$ of $M(B)/B$ containing $b$. 
By \cite{lin2004simple}, $M(B)/B$ is simple if and only if it is purely infinite, 
Since being simple and purely infinite is axiomatizable (\cite[Theorem~2.5.1]{Muenster}), 
$C$ is simple and purely infinite. If $(A'\cap C^{**})'\cap C=A$ then 
Proposition~\ref{P.Key} below  implies that there exists
$d\in A'\cap M(B)/B$ such that $[d,b]\neq 0$.

\begin{prop} \label{P.Key} Suppose $B$ is a \cstar-algebra, $A$ is a separable subalgebra of~$B$, $b\in B$ and $r\geq 0$. 
If $B$ is an ultraproduct or a corona of a $\sigma$-unital, non-unital \cstar-algebra then 
\[
\sup_{d\in (A'\cap B)_+, \|d\|\leq 1} \|[d,b]\| 
=\sup_{d\in (A'\cap B^{**})_+, \|d\|\leq 1}  \|[d,b]\|. 
\]
\end{prop}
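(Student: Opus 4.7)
The inequality $\leq$ is immediate since $A'\cap B$ embeds positively and isometrically into $A'\cap B^{**}$. For the reverse direction, set
\[
s:=\sup\bigl\{\|[d,b]\|:d\in (A'\cap B^{**})_+,\ \|d\|\leq 1\bigr\},
\]
fix $\varepsilon>0$, and pick $d\in (A'\cap B^{**})_+$ with $\|d\|\leq 1$ and $\|[d,b]\|>s-\varepsilon$. The plan is to realize the $(1,0)$-type
\[
\bt(x):=\bigl\{0\leq x,\ \|x\|\leq 1,\ \|[x,a]\|=0\text{ for all }a\in A,\ \|[x,b]\|\geq s-\varepsilon\bigr\}
\]
inside $B$. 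Countable saturation of $B$ --- which holds both when $B$ is an ultraproduct (\cite[Proposition~4.11]{FaHaSh:Model2}) and when $B$ is the corona of a $\sigma$-unital, non-unital \cstar-algebra --- reduces the task to showing that every finite subtype of $\bt(x)$ is $\varepsilon'$-realized in $B$ for every $\varepsilon'>0$.

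For a finite $F\subseteq A$ and $\varepsilon'>0$, consider the restricted $B$-linear formula
\[
\gamma_F(x):=\max_{a\in F}\|ax-xa\|+\bigl((s-\varepsilon)\dminus\|bx-xb\|\bigr).
\]
Since $\gamma_F(d)=0$ while $0\leq d\leq 1$ in $B^{**}$, it suffices to establish that $\inf\{\gamma_F(x):x\in B_+,\ \|x\|\leq 1\}=0$. I intend to derive this from the positivity-preserving analogue of Lemma~\ref{L.**.1}, in which the unit ball $B_{\leq 1}$ is replaced throughout the proof by its positive part $B_+\cap B_{\leq 1}$. The proof then goes through verbatim: the set $\{(ax-xa)_{a\in F}:x\in B_+,\ \|x\|\leq 1\}\subseteq B^{|F|}$ is the image of a convex set under a linear map, and its norm-closure contains the analogous image of the positive part of the unit ball of $B^{**}$ --- this is the Hahn--Banach step, which uses (i) separate weak$^*$-continuity of the commutator maps on $B^{**}$, and (ii) Kaplansky's density theorem asserting that $B_+\cap B_{\leq 1}$ is weak$^*$-dense in $B^{**}_+\cap B^{**}_{\leq 1}$. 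One then fixes a norming functional for $[d,b]$ and runs the concluding weak$^*$-open-neighbourhood argument within the positive unit ball.

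Countable saturation now yields $d_0\in B_+$ with $\|d_0\|\leq 1$, $[d_0,a]=0$ for every $a\in A$, and $\|[d_0,b]\|\geq s-\varepsilon$. Letting $\varepsilon\downarrow 0$ gives the required inequality.

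\textbf{The main obstacle} is the positivity-preserving variant of Lemma~\ref{L.**.1}. Day's trick as written in that lemma's proof operates on the full unit ball, so one must verify that each of the two Hahn--Banach separation steps --- producing first an element in the norm-closure of the image of $B_{\leq 1}$ under the $\alpha_j$, and then an element of a weak$^*$-open neighbourhood intersected with a convex set --- can be executed with all witnesses chosen inside the positive cone. This reduces to the observation that convex combinations of positive contractions are positive contractions and to Kaplansky density for positive contractions, neither of which is deep, but both of which require rechecking at each step of the adapted proof.
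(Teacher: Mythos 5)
Your argument is, in outline, exactly the paper's: reduce to the inequality $\geq$, encode the desired commutant element as a countable type with parameters in $\cst(A,b)$, verify approximate finite satisfiability in $B$ via a positive-cone version of Day's trick (Lemma~\ref{L.**.1}), and conclude by saturation. Your explicit attention to the positivity-preserving variant of Lemma~\ref{L.**.1} (convexity of the positive unit ball plus Kaplansky density to identify its weak$^*$-closure in $B^{**}$) is a point the paper glosses over, and it is correct.

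There is, however, one step that fails as stated: you invoke \emph{countable saturation} of $B$ in the corona case. Coronas of $\sigma$-unital, non-unital \cstar-algebras are not countably saturated in general --- the Calkin algebra is already a counterexample --- so this justification is unavailable. What is true, and what the paper uses, is that such coronas (and ultraproducts) are \emph{countably degree-1 saturated} (\cite[Theorem~1]{FaHa:Countable}), and this weaker property suffices here precisely because every condition in your type $\bt(x)$ --- $0\leq x$, $\|x\|\leq 1$, $\|[x,a]\|=0$, $\|[x,b]\|\geq s-\e$ --- is a degree-1 condition. Replace the appeal to full countable saturation by countable degree-1 saturation and your proof matches the paper's.
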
 

\begin{proof}
The only property of $B$ used in the proof of Proposition~\ref{P.Key} (given at the end of this section) 
is that of being countably degree-1 saturated (\cite[Theorem~1]{FaHa:Countable}). 
Since $B\subseteq B^{**}$, it suffices to prove  `$\geq$' in the above inequality.   
Suppose  $b\in B$ and $d\in (A'\cap B^{**})_+$ are  such that 
$\|d\|=1$ and $r\dminus \|[b,d]\|$. Consider the  type $\bt(x)$ consisting 
of conditions $\|x\|=1$, $x\geq 0$, $\|xb-bx\|\geq r$, and 
$\|[x,a]\|=0$ for $a$ in a countable dense subset of $A$, 
This is a countable degree-1 type. If $\phi_j=0$, for $j<n$, is a finite subset of $\bt(x)$ then 
$\gamma(x):=\max_{j<n} \phi_j(x)$ is a restricted $B$-linear formula and Lemma~\ref{L.**} 
implies that it is approximately satisfied in $B$. By the countable degree-1 saturation of $B$
 (\cite[Theorem~1]{FaHa:Countable}) we can find a realization $d'$ of $\bt(x)$ in $B$. 
 Clearly $d'\in (A'\cap B)_+$, $\|d'\|=1$, and $\|[d',b]\|\geq r$, completing the proof. 
 \end{proof}

Some information on a special case of Pedersen's conjecture can also be found in \cite{kucerovsky2007relative}.

\providecommand{\bysame}{\leavevmode\hbox to3em{\hrulefill}\thinspace}
\providecommand{\MR}{\relax\ifhmode\unskip\space\fi MR }
\providecommand{\MRhref}[2]{%
  \href{http://www.ams.org/mathscinet-getitem?mr=#1}{#2}
}
\providecommand{\href}[2]{#2}

\end{document}